	\def\Snospace~{\S{}}
\newenvironment{summary}{\begin{quote}\small}{\end{quote}}
\declaretheorem[style=plain,name=Theorem,numberwithin=section]{theorem}
\declaretheorem[style=plain,name=Proposition,sibling=theorem]{proposition}
\declaretheorem[style=plain,name=Lemma,sibling=theorem]{lemma}
\declaretheorem[style=plain,name=Problem,sibling=theorem]{problem}
\declaretheorem[style=plain,name=Corollary,sibling=theorem]{corollary}
\declaretheorem[style=definition,name=Definition,sibling=theorem]{definition}
\declaretheorem[style=remark,name=Remark,sibling=theorem]{remark}
\renewcommand{\phi}{\varphi}
\renewcommand{\rho}{\varrho}
\renewcommand{\epsilon}{\varepsilon}
\newcommand{\field}{\mathbb{F}}
\newcommand{\R}{\mathbb{R}}
\newcommand{\Q}{\mathbb{Q}}
\newcommand{\Z}{\mathbb{Z}}
\newcommand{\N}{\mathbb{N}}
\newcommand{\K}{\mathbb{K}}
\newcommand{\leteq}{\coloneq}
\newcommand{\from}{\colon}
\DeclareMathOperator{\Span}{span}
\DeclareMathOperator{\im}{im}
\DeclareMathOperator{\PGL}{PGL}
\newcommand{\Variety}[2]{\mathrm{V}(#2|\,#1)}
\newcommand{\Proj}[2]{\mathbb{P}_{#1}^{#2}}
\newcommand{\Aff}[2]{\mathbb{A}_{#1}^{#2}}
\newcommand{\Veronese}[1]{\nu_{#1}}
\newcommand{\ProjClosure}[1]{\overline{#1}}
\newcommand{\AlgClosure}[1]{\bar{#1}}
\newcommand{\Legendre}[2]{\genfrac{(}{)}{}{}{#1}{#2}}
\newcommand{\noThree}{\textsf{no-three-in-line}}
\newcommand{\noK}[1]{\textsf{no-$(k+1)$-in-line}}
\newcommand{\noFour}{\textsf{no-four-on-a-circle}}
\newcommand{\noSphere}[1]{\textsf{no-$(#1+2)$-on-a-sphere}}
\newcommand{\noSharp}[1]{\textsf{sharp no-$(#1+2)$-on-a-sphere}}
\newcommand{\noQuadric}[2]{\textsf{no-$(#1+2)$-on-$#2$-quadric}}
\title{Rational normal curves as no-$(d+2)$-on-$Q$-quadric sets}
\author{Dávid R. Szabó\thanks{HUN-REN Alfréd Rényi Institute of Mathematics 
(Hungary, 1053 Budapest, Reáltanoda u. 13-15.) and   
ELTE Linear Hypergraphs  Research Group. 
The author is supported by the University Excellence Fund of Eötvös Loránd University. 
E-mail: {\tt szabo.r.david@gmail.com}}}
\date{\today{}}
\begin{document}

\maketitle

\begin{abstract}
    For every $d\geq 2$, 
    we construct a subset $D\subseteq \{1,2,\dots,n\}^d$ of size $n-o(n)$ such that 
    every affine hyperplane of $\R^d$ intersects $D$ in at most $d$ points, and 
    every hypersphere of $\R^n$ intersects $D$ in at most $d+1$ points. 
    This construction is the largest one currently known, 
    and strongly builds on ideas of Dong, Xu, and also of Thiele. 
    More generally, we prove that the role of hyperspheres can be replaced 
    by $Q$-quadrics, i.e. by quadratic surfaces given by an equation 
    whose degree two homogeneous part equals a fixed quadratic form $Q$. 
    We formulate analogous statements in affine spaces over (finite) fields. 
    Essentially, every construction is given by a suitable rational normal curve in a $d$-dimensional projective space.
\end{abstract}

\tableofcontents

\section{Introduction}

\subsection{History}

\paragraph{The \noThree{} problem} 
In \citeyear{Dudeney}, \citeauthor{Dudeney}  asked at most how many points can be selected from  an $n\times n$ grid of the Euclidean plane so that no three of these points lie on the same line \cite{Dudeney}. 
The so-called \noThree{} problem of determining this maximum value is still open in general (apart from small values of $n$ \cite{flammenkamp1998progress}) despite receiving considerable attention.  
The best known upper bound is still the obvious $2n$ obtained by considering the rows. 
The current best known lower bound of $\frac{3}{2}n-o(n)$ by \citeauthor{HJSW}   stands since \citeyear{HJSW} \cite{HJSW}. 
This construction uses degree $2$ algebraic curves over finite fields. 
Several conjectures were formulated concerning
 the asymptotic value \cite{Guy/Kelly:1968, Brass2005, Eppstein:2018,  green100}.

\paragraph{Variations on a theme}
Many variants of the \noThree{} problem have been studied. 
For example, for the \noK{k} problem (where at most $k$ points are allowed on every line), the obvious upper bound $kn$ was recently shown to sharp by \citeauthor{noKlarge} for $k>C\sqrt{n\log(n)}$ \cite{noKlarge} which was extended by \citeauthor{GrebennikovKwan} to $n\geq k\geq 10^{37}$ \cite{GrebennikovKwan}, while close to optimal constructions are known for smaller values of $k$ \cite{noKconst}. 
See for example \cite{brass2003counting} by \citeauthor{brass2003counting}, \cite{Lefmann} by \citeauthor{Lefmann}, \cite{por2007no} by \citeauthor{por2007no} or \cite{suk2025higherdimensionalpointsets} by \citeauthor{suk2025higherdimensionalpointsets} for a higher dimensional generalisation. 
\citeauthor{nagy2023extensible} considered the extensible version of the problem to $\Z^2$ \cite{nagy2023extensible}.  
For more  related general position problems, see the survey \cite{SurveyKlavvzar2025general} by \citeauthor{SurveyKlavvzar2025general}.

\paragraph{The \noFour{} problem}
Another related theme additionally involves quadratic hypersurfaces. Erdős and Purdy asked to determine the maximum number of points that can be selected from an $n\times n$ grid so that no $4$ of them are on a line or on a circle, known as the \noFour{} problem \cite[\S F6]{guy2004unsolved}. 
\citeauthor{ThielePhD} in \citeyear{ThielePhD} showed this number is between $\frac{1}{4}n$ and $\frac{5}{2}n-\frac{3}{2}$ \cite[\S1.1]{ThielePhD} or \cite{thiele1995no}. 
In his construction, actually, no line contains three points. 
\citeauthor{ThielePhD} \cite[\S1.2]{ThielePhD} and \citeauthor{Brass2005} \cite[\S10.1, Problem~4]{Brass2005} posed to determine the maximum number of points from 
$[n]\leteq \{1,\dots,n\}^d\subset \R^d$ such that no $d+2$ points are contained in a hyperplane or in a hypersphere, known as the \noSphere{d} problem.  
Considering parallel hyperplanes, we see that at most $(d+1)n$ points may be selected. 
The first non-trivial construction was given in \citeyear{ThielePhD}, \citeauthor{ThielePhD} \cite[\S1.2]{ThielePhD} gave an algebraic construction of size $\Omega(n^{1/(d-1)})$. 
Recently, \citeauthor{SukWhite2024} \cite[Theorem 1.1]{SukWhite2024} gave a construction based on VC-dimensions of size $n^{3/(d+1)-o(n)}$. 
In turn, this was improved further by \citeauthor{Keevash2025subsetslatticecubesavoiding} \cite[Corollary 1.6]{Keevash2025subsetslatticecubesavoiding} using randomised constructions to $\Omega(n^{(\min\{d,4\})/(d+1) - c/\log\log n})$. 
The best known construction of size $n-o(n)$ is due to \citeauthor{DongXu2025} \cite[Theorem 2]{DongXu2025} and uses algebraic curves.

\subsection{Main results}

\paragraph{The \noSharp{d} problem}
Following \cite[\S1.2]{ThielePhD}, in this paper, we consider the following slightly sharper (and maybe more natural) version of the previous problem.

\begin{problem}[The \noSharp{d} problem]
\label{p:noSharp}
    For $n\in\N$ and $d\geq 2$, 
    determine the size, 
    say $f([n]^d,\R^d)$, 
    of the largest subset $D\subseteq [n]^d\subset \R^d$  satisfying the \noSharp{d} condition in $\R^d$, i.e. 
    \begin{itemize}[nosep]
        \item no $d+1$ points of $D$ lie on a common affine hyperplane in $\R^d$, and 
        \item no $d+2$ points of $D$ lie on a common hypersphere in $\R^d$.
    \end{itemize}     
\end{problem}
By considering parallel hyperplanes, we see that $f([n]^d,\R^d)\leq d\cdot n$. 
The aforementioned construction of \citeauthor{ThielePhD} from \citeyear{ThielePhD} actually shows that $\frac{3}{32}n^{1/(d-1)}\leq f([n]^d,\R^d)$, see \cite[\S1.2]{ThielePhD} or \cite[footnote]{thiele1995no}. 
Recently, this was improved to $n/(d+1)-o(n)\leq f([n]^d,\R^d)$ by \citeauthor{DongXu2025} \cite[Theorem 2]{DongXu2025}. 
(Note that in \cite{DongXu2025}, the notation $\operatorname{ex}([n]^d;(d+1)_{\scriptscriptstyle\square},(d+2)_{\scriptscriptstyle\bigcirc})$ is used for our $f([n]^d,\R^d)$.)
Both constructions used  algebraic curves over finite fields in a similar way.
The following main result of this paper improves the previous best lower bound by a factor of $d+1$.
\begin{theorem}[The \noSharp{d} problem]
\label{thm:mainShpereGrid}
    For every $d\geq 2$, as $n\to\infty$, we have 
    \[n-o(n)\leq f([n]^d,\R^d)\leq d\cdot n.\]
\end{theorem}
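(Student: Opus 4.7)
The upper bound $f([n]^d,\R^d)\leq d\cdot n$ is immediate: slicing $[n]^d$ into $n$ parallel hyperplanes, each meets $D$ in at most $d$ points. For the lower bound, the plan is to carry an optimal projective configuration—a suitable rational normal curve—from finite-field geometry into $\Z^d$ via reduction modulo a prime. By the prime number theorem, fix a prime $p\leq n$ with $p=n-o(n)$.

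The central geometric object is a rational normal curve $C\subseteq\Proj{\mathbb{F}_p}{d}$ of degree $d$ whose intersection divisor with the hyperplane at infinity $\{x_0=0\}$ consists of $d$ distinct points lying on the absolute quadric $\{\sum_{i=1}^d x_i^2=0\}$. A rational normal curve in $\Proj{}{d}$ is uniquely determined by $d+3$ points in general position, so $C$ is constructed by choosing a Galois-stable set of $d$ points on the absolute quadric—for $d\geq 3$ the absolute quadric has $\Theta(p^{d-2})$ $\mathbb{F}_p$-points, ample to select $d$ in general linear position; for $d=2$ one uses the two Galois-conjugate points $[0:1:\pm\sqrt{-1}]$—and completing with a generic Galois-stable triple of affine points. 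The resulting curve has $\geq p+1-d$ affine $\mathbb{F}_p$-points, which lift coordinate-wise via $\mathbb{F}_p\leftrightarrow\{1,\ldots,p\}$ to a set $D\subseteq[n]^d$ of size $n-o(n)$.

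Verification of the \noSharp{d} condition uses a uniform reduction template. Suppose $k$ points of $D$ satisfy an equation $F(x)=0$ in $\R^d$. Clear denominators and divide by the gcd so that $F\in\Z[x]$ has content $1$; then the reduction $\overline F\in\mathbb{F}_p[x]$ is non-zero and vanishes on the corresponding $k$ points of $C(\mathbb{F}_p)$. In the hyperplane case ($F$ linear), Bezout gives $k\leq\deg(C)=d$, ruling out $k\geq d+1$. In the sphere case, write $F=A\sum x_i^2+L+c$; if $p\nmid A$, then $\overline F$ is a quadric whose degree-$2$ part is $A\sum x_i^2$, so its projective closure contains the absolute quadric, and in particular the $d$ points of $C\cap\{x_0=0\}$. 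Applying Bezout to the pair $(C,\overline F)$—valid because $C$ is not contained in any $Q$-quadric (a generic condition on the $d+3$ defining points)—leaves at most $2d-d=d$ affine intersections, ruling out $k\geq d+2$. If $p\mid A$, then the degree-$2$ part vanishes modulo $p$, and the remaining coefficients cannot all be divisible by $p$ (else content exceeds $1$); hence $\overline F$ is a non-trivial linear form and the hyperplane argument applies.

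The principal obstacle is the geometric existence step: producing $C$ so that its intersection divisor with $\{x_0=0\}$ equals the prescribed $d$ distinct points on the absolute quadric and, simultaneously, $C$ lies in no $Q$-quadric (otherwise Bezout fails). Both properties hold for a generic $(d+3)$-tuple by a dimension count: $Q$-quadrics in $\Proj{\mathbb{F}_p}{d}$ form a $(d+2)$-parameter family while those containing a rational normal curve impose $2d+1$ conditions, forcing the intersection to be generically empty. The $d=2$ case is the most constrained, as both absolute points must be used and no flexibility remains at infinity, but the construction still goes through for any sufficiently large $p$. Once $C$ is fixed, the mod-$p$ reduction, the application of Bezout, and the tally $|D|=n-o(n)$ are routine.
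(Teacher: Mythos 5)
Your upper bound and your reduction-mod-$p$ lifting template are fine and essentially identical to the paper's. The fatal problem is the core geometric step: you require the degree-$d$ rational normal curve $C$ to meet the hyperplane at infinity in $d$ distinct points \emph{all} lying on the absolute quadric $\{Q=0\}$, and then claim, by a dimension count, that a generic such $C$ lies on no $Q$-quadric. This claim is not merely unproven, it is false for every $d\geq 2$: any such curve lies on a $Q$-quadric. For $d=2$ this is concrete: an irreducible conic through both circular points $[0:1:\pm\sqrt{-1}]$ has degree-two part proportional to $X_1^2+X_2^2$, i.e.\ it \emph{is} a circle, so all of its roughly $p$ affine points lie on one circle. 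In general, let $\gamma\from\Proj{\field_p}{1}\to\Proj{\field_p}{d}$ parametrise $C$. The binary form $x_0\circ\gamma$ has degree $d$ with simple roots at the parameters of the $d$ infinity points, and $Q\circ\gamma$ (degree $2d$) vanishes at all of these roots, so $Q\circ\gamma=(x_0\circ\gamma)\cdot R$ for some binary form $R$ of degree $d$. Since $C$ is non-degenerate, the map $L\mapsto L\circ\gamma$ from linear forms on $\Proj{\field_p}{d}$ to binary degree-$d$ forms is an isomorphism of $(d+1)$-dimensional spaces, so there exist $\ell$ linear in $X_1,\dots,X_d$ and a constant $c$ with $\ell\circ\gamma+c\,(x_0\circ\gamma)=-R$; then $C$ lies on the projective quadric $Q+x_0\ell+cx_0^2=0$, whose affine part is the $Q$-quadric $\Variety{\Aff{\field_p}{d}}{Q+\ell+c}$. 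In other words, once the $d$ infinity conditions are imposed, containment in a $Q$-quadric becomes a square linear system with invertible matrix and always has a solution; your count of ``$2d+1$ conditions versus a $(d+2)$-parameter family'' does not apply to this constrained situation. (This is also exactly why Dong--Xu work with degree $d+1$: there the analogous map lands in a $(d+2)$-dimensional space of binary forms, is not surjective, and non-containment can be forced by a tangency condition.) As written, your construction produces a set lying entirely on a single hypersphere, so the \noSharp{d} condition fails maximally.

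The paper's construction is designed precisely to sidestep this: it places only $d-1$ of the infinity points on $\{Q=0\}$ and the last point $P_d$ \emph{off} it (richness of $Q$; over $\field_p$ this is possible unless $Q$ is irreducible of rank $2$ by \autoref{lem:richOverFinite}, and for $Q=X_1^2+\dots+X_d^2$ with $d=2$ one takes $p\equiv 1\pmod 4$ via \autoref{thm:Dirichlet}). Since every $Q$-quadric closure meets $\{x_0=0\}$ exactly in $\{Q=0,\,x_0=0\}$ and $P_d\in C$, non-containment is automatic, and Bézout leaves at most $2d-(d-1)=d+1$ affine intersection points, which is precisely what the \noSharp{d} condition allows (your target of $d$ affine intersections is stronger than needed). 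To salvage your variant you must either move one infinity point off the quadric, thereby recovering the proof of \autoref{prop:QgenericOverF}, or increase the degree of the curve as in Dong--Xu.
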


\paragraph{The \noQuadric{d}{Q} problem} 
Note that the equation of every hypersphere in $\R^d$ is a quadratic polynomial $F\in \R[X_1,\dots,X_d]$ whose degree $2$ \emph{homogeneous} part is the fixed polynomial $Q=X_1^2+\dots+X_d^2$. 
Extending this, 
for an arbitrary \emph{quadratic form} (i.e. homogeneous degree $2$ polynomial) $Q\neq 0$, 
a surface is said to be a \emph{$Q$-quadric} (cf. \autoref{def:Qquadric}) if its equation is a degree $2$ polynomial whose homogeneous degree $2$ part is exactly $Q$ (up to scalar multiples).  
Similarly to hyperspheres, a $Q$-quadric is uniquely determined  by $d+1$ general position points (see \autoref{lem:uniqueQquadric}), which motivates the following problem.

\begin{problem}[The \noQuadric{d}{Q} problem]
\label{p:noQuadric}
    For $n\in\N$, $d\geq 2$ and quadratic form $0\neq Q\in\Q[X_1,\dots,X_d]$, 
    determine the size, 
    say $f_Q([n]^d,\R^d)$, 
    of the largest subset $D\subseteq [n]^d\subset \R^d$  satisfying the \emph{\noQuadric{d}{Q} condition in $\R^d$}, i.e. 
    \begin{itemize}[nosep]
        \item no $d+1$ points of $D$ lie on a common affine hyperplane in $\R^d$, and 
        \item no $d+2$ points of $D$ lie on a common $Q$-quadric in $\R^d$.
    \end{itemize}     
\end{problem}

The next statement is the other main result of this paper, which immediately implies the previous \autoref{thm:mainShpereGrid} as a special case, which we prove in \autoref{sec:ConstrGrid}.
\begin{theorem}[The \noQuadric{d}{Q} problem]
\label{thm:mainQgrid}
    For every $d\geq 2$ and ever quadratic form $0\neq Q\in  \Q[X_1,\dots,X_d]$, 
    we have 
    \[n-o(n)\leq f_Q([n]^d,\R^d)\leq d\cdot n.\]
\end{theorem}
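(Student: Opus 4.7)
My plan is to take $D\subset[n]^d$ to be the image, under the shifted inclusion $\iota\from\mathbb{F}_p\cong\{0,\dots,p-1\}\hookrightarrow\{1,\dots,n\}$, of the affine $\mathbb{F}_p$-rational points of a rational normal curve $C\subset\mathbb{P}^d_{\mathbb{F}_p}$, for a carefully chosen prime $p=n-o(n)$. The crucial design choice is to arrange that exactly $d-1$ of the $d$ points at infinity of $C$ lie on the projective quadric $V(Q)\subset\mathbb{P}^{d-1}$ (cut out by $Q$ on the hyperplane at infinity of $\mathbb{P}^d$), while the remaining one does not. By Bezout, every $Q$-quadric then absorbs $\geq d-1$ of its $2d$ intersections with $C$ at infinity, leaving $\leq d+1<d+2$ affine intersections; moreover, no $Q$-quadric can contain $C$, since it would have to pass through the off-quadric point at infinity of $C$.

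To execute this, I first select $p$. For $d\geq 3$, Chevalley--Warning ensures $V(Q)(\mathbb{F}_p)$ contains about $p^{d-2}$ points for every sufficiently large $p$, so we can pick such $p$ in $[n-o(n),n]$ freely. For $d=2$ we need $Q$ to split over $\mathbb{F}_p$, equivalently (by quadratic reciprocity) a congruence on $p$ modulo the discriminant of $Q$; Dirichlet's theorem then yields such a prime near $n$. Next I build $C$ by Lagrange interpolation: fix $d$ distinct scalars $\alpha_1,\dots,\alpha_d\in\mathbb{F}_p$, points $R_1,\dots,R_{d-1}\in V(Q)(\mathbb{F}_p)$ in linearly general position in $\mathbb{P}^{d-1}$, and a point $R_d\in\mathbb{P}^{d-1}(\mathbb{F}_p)\setminus V(Q)$ outside the linear span of $R_1,\dots,R_{d-1}$; then take degree-$d$ binary forms $f_0,\dots,f_d$ on $\mathbb{P}^1$ with $f_0$ vanishing exactly at the $\alpha_i$ and $f_j(\alpha_i)=(R_i)_j$. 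The $R_i$ spanning $\mathbb{P}^{d-1}$ forces linear independence of $f_0,\dots,f_d$, so $\phi=(f_0:\dots:f_d)\from\mathbb{P}^1\to\mathbb{P}^d$ embeds $\mathbb{P}^1$ as the sought rational normal curve $C$ with $\phi(\alpha_i)=(0:R_i)$.

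Bezout in $\mathbb{P}^d$ now delivers the intersection bounds over $\mathbb{F}_p$. Since $C$ is non-degenerate, any hyperplane meets $C$ in at most $\deg C=d$ points. For any $Q$-quadric $S$ homogenising to $Q+X_0L+cX_0^2$, the argument above shows $C\not\subset S$, so Bezout gives $|C\cap S|\leq 2d$ with multiplicity. The pullback $Q(f_1,\dots,f_d)+f_0\,L(f_1,\dots,f_d)+cf_0^2$ vanishes at each $\alpha_i$ with $i<d$ (the first summand because $R_i\in V(Q)$, the others because $f_0(\alpha_i)=0$), contributing intersection multiplicity $\geq 1$ at each of the $d-1$ points $\phi(\alpha_i)$ at infinity on $V(Q)$. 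Hence $S$ meets $C$ in at most $2d-(d-1)=d+1$ affine $\mathbb{F}_p$-rational points.

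Finally, $|D|=|C\cap\mathbb{A}^d(\mathbb{F}_p)|\geq p+1-d=n-o(n)$, and it remains to transfer the $\mathbb{F}_p$-properties to $\R^d$. Suppose for contradiction some $d+1$ points of $D\subset\mathbb{Z}^d$ lay on a real hyperplane: Cramer's rule on the defining integer linear system yields integer coefficients with $\gcd$ equal to one, whose reduction modulo $p$ is a nonzero $\mathbb{F}_p$-hyperplane through the $d+1$ distinct reduced points of $C(\mathbb{F}_p)$, contradicting the $\mathbb{F}_p$-hyperplane bound. If instead $d+2$ points of $D$ lay on a real $Q$-quadric $\lambda Q+L+c=0$, clearing denominators and dividing by the $\gcd$ produces integer coefficients with $\gcd$ one; the mod-$p$ reduction is either a $Q$-quadric over $\mathbb{F}_p$ (when $\bar\lambda\neq 0$), a hyperplane through $d+2>d$ points (when $\bar\lambda=0$ but $\bar L\neq 0$), or a nonzero constant (impossible, as the reduced points satisfy the equation), each case contradicting an $\mathbb{F}_p$-property of $C$. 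The main subtlety of the plan is the configuration at infinity: the ``$d-1$ on $V(Q)$, one off'' arrangement is essential both to absorb enough Bezout intersections and to prevent $C$ from itself being a $Q$-quadric; handling degenerate forms $Q$, where $V(Q)$ has special structure, may require a careful case analysis of this step.
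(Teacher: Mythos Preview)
Your approach is essentially the paper's: a rational normal curve with $d-1$ ideal points on $\Variety{\Proj{\field_p}{d-1}}{Q}$ and one off it, Bezout at infinity, and a mod-$p$ lift. The construction via Lagrange interpolation is a pleasant alternative to the paper's use of $\PGL$-transitivity, and your lifting argument is the same as the paper's \autoref{lem:lift}.

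There is, however, a genuine gap in your case split. You claim that for $d\geq 3$, Chevalley--Warning supplies enough points of $V(Q)(\field_p)$ to find $R_1,\dots,R_{d-1}$ in linearly general position. This fails precisely when $Q$ reduces modulo $p$ to an \emph{irreducible rank-$2$} form, and this can happen for arbitrary $d$, not only $d=2$. For instance, take $d=3$ and $Q=X_1^2+2X_2^2\in\Q[X_1,X_2,X_3]$: for any prime $p$ with $-2$ a non-residue, the projective variety $\Variety{\Proj{\field_p}{2}}{Q}$ is the single point $[0:0:1]$, so you cannot choose $d-1=2$ independent points on it. The Chevalley--Warning bound $|\Variety{\Aff{\field_p}{d}}{Q}|\geq p^{d-2}$ only forces the span of the zero set to have dimension $\geq d-2$; you need dimension $\geq d-1$, and the gap between these is exactly the irreducible rank-$2$ case (this is the content of the paper's \autoref{lem:richOverFinite}). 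Your closing caveat about ``degenerate $Q$'' is thus not a side issue but the heart of the matter.

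The fix is to run your $d=2$ argument in all dimensions whenever $Q$ has rank $2$: write $Q=\lambda_{1,1}L_1^2+\lambda_{1,2}L_1L_2+\lambda_{2,2}L_2^2$ with discriminant $\Delta=\lambda_{1,2}^2-4\lambda_{1,1}\lambda_{2,2}$, and use Dirichlet to choose $p\equiv 1\pmod{4|\Delta|}$ so that $\sqrt{\Delta}\in\field_p$; then $Q_p$ splits and its zero locus is a union of two hyperplanes in $\Proj{\field_p}{d-1}$, from which your $R_1,\dots,R_{d-1}$ can be extracted. This is exactly what the paper does in \autoref{thm:mainEps}, where the dichotomy is ``$Q$ irreducible of rank $2$'' versus ``everything else'', not ``$d=2$'' versus ``$d\geq 3$''.
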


\paragraph{Finite field analogues}

In turn, \autoref{thm:mainQgrid} is actually a consequence of an analogous statement over finite fields $\field$. 
In this setup, we pick the points from the full affine space $\field^d$ so that every affine hyperplane in $\field^d$ can contain at most $d$ selected points, and 
every $Q$-quadric in $\field^d$ can contain at most $d+1$ selected points. Such configurations are said to be \emph{$Q$-generic} (see \autoref{def:Qgeneric}), and its minimal size denoted by  $f_Q(\field^d,\field^d)$. 

\begin{theorem}[The \noQuadric{d}{Q} problem over finite fields]
\label{thm:QgenericoverFinite}
    Let $\field$ be a finite field and $2\leq d\in\N$ with $d\leq |\field|+1$. 
    Let $0\neq Q\in\field[X_1,\dots,X_d]$ be any quadratic form which is not irreducible of rank $2$.
    Then \[|\field|+1-d\leq f_Q(\field^d,\field^d)\leq d\cdot |\field|.\]
\end{theorem}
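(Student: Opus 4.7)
The upper bound $f_Q(\field^d,\field^d)\le d|\field|$ follows immediately by partitioning $\field^d$ into the $|\field|$ parallel affine hyperplanes $\{X_1=c\}_{c\in\field}$ and invoking the hyperplane condition of $Q$-genericity, which gives at most $d$ points on each slice.

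For the lower bound, the plan is to realise a $Q$-generic configuration as the set of affine $\field$-rational points of a rational normal curve $\bar{C}\subset\Proj{\field}{d}$ chosen so that its intersection with the hyperplane at infinity $H_\infty:=V(X_0)$ consists of $d$ distinct $\field$-rational points $P_1,\dots,P_d$ in linearly general position in $H_\infty\isom\Proj{\field}{d-1}$, with $P_1,\dots,P_{d-1}$ lying on the \emph{quadric at infinity} $Q_\infty:=V(Q)\cap H_\infty$ and $P_d\notin Q_\infty$. Given such $\bar{C}$, the candidate set $D := \bar{C}(\field)\setminus H_\infty$ has precisely $|\field|+1-d$ points, since $\bar{C}(\field)$ is in bijection with $\Proj{\field}{1}(\field)$ of size $|\field|+1$. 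Concretely, $\bar{C}$ can be produced as the image of $\Proj{\field}{1}$ under $[s:t]\mapsto[\prod_{j=1}^{d}(t-z_j s):f_1(s,t):\cdots:f_d(s,t)]$ for suitable degree-$d$ forms $f_i$ arranged so that $[f_1(z_j):\cdots:f_d(z_j)]$ represents $P_j$.

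The $Q$-generic property of $D$ then follows from B\'ezout combined with the defining property of rational normal curves. Any $d+1$ points of $\bar{C}$ are in linearly general position in $\Proj{\field}{d}$, so no affine hyperplane of $\field^d$ contains $d+1$ points of $D$. For the quadric condition, fix any $Q$-quadric $V(F)\subset\field^d$ with projective closure $\bar{V}\subset\Proj{\field}{d}$; then $\bar{V}\cap H_\infty=Q_\infty\supseteq\{P_1,\dots,P_{d-1}\}$, while $P_d\notin Q_\infty$ forces $P_d\notin\bar{V}$ and hence $\bar{C}\not\subset\bar{V}$. B\'ezout then gives $\deg(\bar{V}\cap\bar{C})=2d$, of which at least $d-1$ is accounted for by the points $P_1,\dots,P_{d-1}\in\bar{V}\cap\bar{C}\cap H_\infty$; the affine contribution is therefore at most $2d-(d-1)=d+1$, so $|D\cap V(F)|\le d+1<d+2$.

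The main obstacle is establishing existence of the required configuration of points at infinity over $\field$, which is exactly where the hypothesis that $Q$ is not irreducible of rank $2$ enters. In that excluded case, $Q_\infty(\field)$ is either empty (when $d=2$) or supported on a proper subspace of $H_\infty$ of projective dimension $d-3$ (when $d\ge 3$), which prevents one from finding $d-1$ linearly independent $\field$-points on $Q_\infty$. In the permitted subcases---rank $1$, where $Q_\infty$ is a hyperplane of $H_\infty$; reducible of rank $2$, where $Q_\infty$ is a union of two hyperplanes; and rank at least $3$, where $Q_\infty$ has many $\field$-points by e.g.\ Chevalley--Warning---a direct case analysis together with the assumption $d\le|\field|+1$ yields $d-1$ points of $Q_\infty(\field)$ in linearly general position and a further $P_d\in H_\infty(\field)\setminus Q_\infty$ preserving linear independence. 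Producing the rational normal curve over $\field$ through these $d$ prescribed points then reduces to a standard parameter count showing that the relevant moduli space is a non-empty Zariski-open subvariety of affine space.
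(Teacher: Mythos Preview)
Your proposal is correct and follows essentially the same route as the paper. Both arguments establish the upper bound via parallel hyperplanes, construct the configuration as the affine part of a rational normal curve $C$ whose $d$ ideal points $P_1,\dots,P_d$ are arranged with $P_1,\dots,P_{d-1}\in V(Q)\cap H_\infty$ and $P_d\notin V(Q)$, and deduce the \noQuadric{d}{Q} property from B\'ezout exactly via the count $2d-(d-1)=d+1$.

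The only noteworthy differences are in two implementation details. First, for the existence of the point configuration at infinity (the paper's notion of $Q$ being \emph{rich}), you propose a case split on the rank of $Q$; the paper instead gives a single unified argument using Warning's second theorem, showing $|V_Q|\ge|\field|^{d-2}$ so that $\Span_\field(V_Q)$ has dimension at least $d-2$, and then characterising the equality case as precisely the irreducible rank~$2$ forms. This is slightly cleaner than a rank-by-rank analysis, and in particular handles your rank~$\ge 3$ case without needing to separately argue that ``many points'' implies ``$d-1$ points in general position''. Second, for interpolating a rational normal curve through $P_1,\dots,P_d$, you sketch an explicit parametrisation and a parameter count; the paper simply invokes transitivity of $\PGL(d+1,\field)$ on $d$-tuples in general position to carry $d$ points of the standard Veronese curve onto $P_1,\dots,P_d$, which avoids any moduli-space reasoning and uses the hypothesis $d\le|\field|+1$ only to guarantee $d$ distinct preimages in $\Proj{\field}{1}$.
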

We prove this statement in \autoref{sec:ConstrFinite} by considering the affine part of a suitably constructed rational normal curve over $\field$. 
It is worth specialising the previous statement to the case of hyperspheres by taking $Q=X_1^2+\dots+X_d^2$.
\begin{theorem}[The \noSharp{d} problem over finite fields]
\label{thm:mainSphhereFiniteFields}
    Let $\field$ be a finite field and $d\in \N$ with $d\leq |\field|+1$. 
    If $d=2$, assume further that $|\field|\not\equiv 3\pmod{4}$. 
    Then there is a $C_0\subseteq \Aff{\field}{d}$ of size $|\field|+1-d$ 
    satisfying the \noSharp{d} condition in $\Aff{\field}{d}$.
\end{theorem}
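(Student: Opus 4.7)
The plan is to deduce this statement as a direct corollary of \autoref{thm:QgenericoverFinite} applied to the standard sum-of-squares form $Q_0 \leteq X_1^2 + \dots + X_d^2 \in \field[X_1,\dots,X_d]$. By definition, a hypersphere in $\field^d$ is a surface whose defining equation has $Q_0$ as its degree-two homogeneous part, so hyperspheres are precisely the $Q_0$-quadrics. Consequently a $Q_0$-generic subset of $\Aff{\field}{d}$ is exactly a subset satisfying the \noSharp{d} condition in $\Aff{\field}{d}$, and one only has to check that $Q_0$ is admissible as input to \autoref{thm:QgenericoverFinite}.

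The remaining task is therefore to verify that $Q_0$ is \emph{not} an irreducible quadratic form of rank $2$, and this is where the mod-$4$ hypothesis enters. First, in characteristic $2$ one has $Q_0 = (X_1 + \dots + X_d)^2$, which is manifestly reducible, so the hypothesis is trivially met. In odd characteristic the Gram matrix of $Q_0$ is the identity, hence $Q_0$ has rank exactly $d$; for $d \geq 3$ the rank already exceeds $2$ and we are done. In the remaining case $d = 2$ and $|\field|$ odd, $Q_0 = X_1^2 + X_2^2$ does have rank $2$, but it factors as $(X_1 - iX_2)(X_1 + iX_2)$ with $i^2 = -1$ whenever $-1$ is a square in $\field$, and by standard quadratic-reciprocity considerations this happens precisely when $|\field| \equiv 1 \pmod{4}$. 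Thus $Q_0$ is irreducible of rank $2$ exactly when $|\field| \equiv 3 \pmod{4}$, so the hypothesis $|\field| \not\equiv 3 \pmod{4}$ excludes precisely this borderline case.

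Given all the above, applying \autoref{thm:QgenericoverFinite} with $Q = Q_0$ yields a $Q_0$-generic set $C_0 \subseteq \Aff{\field}{d}$ of size $|\field| + 1 - d$, which by the identification in the first paragraph automatically satisfies the \noSharp{d} condition in $\Aff{\field}{d}$. There is no genuine new obstacle: the only delicate point is the reducibility-versus-rank analysis of $Q_0$, which is exactly what forces the mod-$4$ exclusion in dimension $2$ and which is the reason a completely mechanical invocation of \autoref{thm:QgenericoverFinite} is not enough without at least this case split.
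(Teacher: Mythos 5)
Your proposal is correct and follows essentially the same route as the paper: take $Q=X_1^2+\dots+X_d^2$, identify hyperspheres with $Q$-quadrics via \autoref{rem:sphere}, and verify via the rank/irreducibility criterion of \autoref{rem:rank2irreducibleQuadratics} (your explicit case split on characteristic $2$, $d\geq 3$, and $d=2$ with $-1$ a square exactly when $|\field|\equiv 1\pmod 4$) that the hypothesis of \autoref{thm:QgenericoverFinite} holds. The only nitpick is attributing the criterion for $-1$ being a square in $\field_q$ to quadratic reciprocity; it follows from cyclicity of $\field_q^\times$ (or Euler's criterion in the prime case), but this does not affect the argument.
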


\subsection{Idea of the construction}
Following most algebraic constructions, we will mostly work over a field of prime order. 
We mix the most useful ideas from \cite{ThielePhD} and \cite{DongXu2025}.

\paragraph{Motivation from \citeauthor{ThielePhD}: degree $d$ algebraic curves}
\cite{ThielePhD} considered the degree $d$ modular momentum curve which automatically solved  
every hyperplane. 
Since hyperspheres intersect this curve in too many points, he systematically deleted many points of his curve so that at most $d+1$ points remained on every hypersphere.

\paragraph{Motivation from \citeauthor{DongXu2025}: hiding intersection points at infinity} 
This idea is from \cite{DongXu2025} (but similar phenomena have appeared much less explicitly in \cite{HJSW}).  
By Bézout's theorem, every projective curve of degree $d$ (not contained in a hypersphere) intersects every hypersphere in exactly $2d+2$ points (counted with multiplicity, over the algebraic closure, and considering points of the ideal hyperplane).
\cite{DongXu2025} carefully constructed such a curve whose $d+1$ points of intersection with the ideal hyperplane (at infinity) actually are points of \emph{every} hypersphere, 
hence leaving at most $d+1$ intersection points in the affine space.
They constructed the parametrisation for such a curve by a relatively long computation, where they also had to ensure that the curve is not contained in any hyperspheres. 
The degree of the curve may produce too many intersection points with hyperplanes, so (similarly to \cite{ThielePhD}), \cite{DongXu2025} also removed many points of the curve so that every hyperplane contains at most $d$ points.

For example, in the case $d=2$, every circle intersects the ideal line in exactly the same two fixed points $P_1$ and $P_2$. The degree $3$ curve constructed in \cite{DongXu2025} passes through $P_i$ with multiplicity $i$. This tangential property at $P_2$ shows that the curve is not contained in any circle. 
The geometric picture in higher dimensions in \cite{DongXu2025} is analogous.

\paragraph{Our construction} 
We present a structural construction that is almost free of computations and works as well to arbitrary $Q$-quadrics, not just to hyperspheres.

We aim to choose the degree of our curve to be as small as possible without being contained in any hyperplane. 
It is a classical algebraic geometry fact (see \autoref{rem:RNCdegChar}) the smallest possible degree is $d$ (if the dimension of the ambient space is $d$) and that 
up to projective linear transformations, there is a \emph{unique} irreducible non-degenerate degree $d$ curve in the $d$-dimensional projective space, the so called \emph{rational normal curve}. (This characterisation is not used in our proof, but it helped to find the construction.)

We start with the moment curve (i.e. the Veronese embedding) of \cite{ThielePhD}, which, by above, is just a special instance of rational normal curves. 
Next, we apply a suitable (projective) linear transformation to it that enables keeping the whole (affine) curve in our construction without the need to delete additional points. 
This is the step where our construction beats in size the former ones.

To find this suitable transform, 
we pick points $P_1,\dots,P_d$ in general position from the ideal hyperplane such that 
$P_1,\dots,P_{d-1}$ are contained in the projective closure of \emph{every} $Q$-quadric, and $P_d$ is not contained in none of them. 
Such a choice exploits the fact that -- similarly to hyperspheres -- every $Q$-quadric intersects the ideal hyperplane in exactly the same points. 
Using the transitivity automorphism group of projective spaces on points in general position, 
we can freely take the transform of the Veronese curve passing through the points $P_1,\dots,P_d$, see \autoref{lem:interpolateRNC}.
The choice of $P_d$ ensures that this curve is not contained in any $Q$-quadrics. Bézout's theorem then applies and shows that our curve intersects every $Q$-quadric in at most $2d$ points. 
However, exactly $d-1$ of these points were constructed to be in the ideal hyperplane, leaving at most $d+1$ intersection points in the affine space as required.

\subsection{Structure of the paper} 
In \autoref{sec:prelim} we introduce some notation, 
review standard facts from algebraic geometry and number theory. 
In \autoref{sec:Qquadrics}, as a generalisation of hyperspheres, we introduce the notion of a $Q$-quadric for any quadratic form $Q$ over a field and discuss its basic properties.
Finally, in \autoref{sec:constructions}, we prove the main statements in a constructive way first over arbitrary fields (\autoref{sec:ConstrField}), 
then over finite fields (\autoref{sec:ConstrFinite}), 
and finally in finite grids on $\R^d$ (\autoref{sec:ConstrGrid}).

We start every section with a short summary to help the reader with the structure and the goals of the current section.

\subsection{Acknowledgement} 
The author is grateful to Zoltán Lóránt Nagy and Benedek Kovács for the fruitful discussions.

\section{Preliminaries and tools}\label{sec:prelim}
\begin{summary}
    In this section, we fix some notations and collect all the necessary tools for this paper. 
    The first toolkit is algebraic geometry: elementary affine and projective varieties, Bézout's theorem, Warning's theorem about the cardinality of affine varieties over finite fields. 
    The other technical toolkit is classical number theory: quadratic residues, the Prime number theorem, and Dirichlet's theorem on primes in arithmetic progressions.
\end{summary}

\subsection{Notation}

Denote by $\N$, $\Q$ and $\R$ the set of natural, rational and real numbers, respectively. 
For $0<n\in\N$, write $[n]\leteq \{1,2,\dots,n\}$. 

$\field$ denotes an arbitrary field (finite or infinite, algebraically closed or not). 
$\field^\times$ is the multiplicative group of $\field$. 
For a prime power $q$, we denote by $\field_q$ the finite field of order $q$. 
We write $\sqrt{a}\in \field$ to mean that $a\in \field$ is a square, i.e. there exists $x\in\field$ with $x^2=a$. In this paper, it will cause no confusion to write $\sqrt{a}$ for any of $\pm x$. 
 
$\field[X_1,\dots,X_d]$ denotes the polynomial ring over $\field$ in $d$ indeterminates. 
Typically, we have $d\geq 2$. 
For $\delta\in\N$, let $\field[X_1,\dots,X_d]_\delta$ denote the set of homogeneous polynomials $f\in \field[X_1,\dots,X_d]$ of degree $\delta$, 
i.e. every monomial in $f$ with nonzero coefficient has total degree exactly $\delta$. 
Denote the \emph{homogenisation} $f^*\in\field[X_0,\dots,X_d]_\delta$ of $f\in\field[X_1,\dots,X_d]$ of degree $\delta$ by $x_0^\delta f(x_1/x_0,\dots,x_d/x_0)=f^*(x_0,\dots,x_d)$. 
Conversely, the \emph{dehomogenisation} $F_*\in\field[X_1,\dots,X_d]$  of  $F\in\field[X_0,\dots,X_d]_\delta$ is given by $F_*(x_1,\dots,x_d)=F(1,x_1,\dots,x_d)$.

\subsection{Classical algebraic geometry}

For completeness, we recall some standard material that can be found for example in \cite{Harris}.

\paragraph{Affine and projective varieties}
Let $\field$ be an arbitrary field and $d\in\N$.

\begin{definition}[Affine varieties]
    Write $\Aff{\field}{d}\leteq \field^d$ for the $d$-dimensional affine space over $\field$. 
    For a subset $I\subseteq \field[X_1,\dots,X_d]$, 
    define the affine variety by 
    $\Variety{\Aff{\field}{d}}{I}\leteq \{x\in \Aff{\field}{d}:\forall f\in I\quad f(x)=0\}$.  
    For $f\in\field[X_1,\dots,X_d]$ of degree $\delta$, 
    we call $\Variety{\Aff{\field}{d}}{f}\leteq \Variety{\Aff{\field}{d}}{\{f\}}$ a \emph{hypersurface} of degree $\delta$. 
    \emph{Hyperplanes} are hypersurfaces of degree $1$.
\end{definition}

\begin{definition}[Projective varieties]
    Define the \emph{projective space of dimension $d$ over $\field$} to be $\Proj{\field}{d}\leteq (\field^{d+1}\setminus\{0\})/\sim$ where 
    $x\sim y$ if there is $\lambda\in\field^\times$ so that $x=\lambda y$. 
    Denote the $\sim$-equivalence class of $(x_0,\dots,x_d)\in\field^{d+1}\setminus\{0\}$ by $[x_0:\dots:x_d]\in\Proj{\field}{d}$. 
    For a set $I\subseteq \field[X_0,\dots,X_d]$ of homogeneous polynomials, 
    write 
    $\Variety{\Proj{\field}{d}}{I}\leteq \{[x_0:\dots:x_d]\in \Proj{\field}{d}:\forall f\in I\quad f(x_0,\dots,x_1)=0\}$
    for the projective variety. 
    Hypersurfaces and hyperplanes are defined analogously.
\end{definition}

\begin{definition}\label{def:projectivisation}
    For a projective variety $Z=\Variety{\Proj{\field}{d}}{I}$, 
    write $Z_0\leteq \Variety{\Aff{\field}{d}}{\{F_*:F\in I\}}$ for the corresponding affine variety. 
    Write $Z_\infty\leteq \ProjClosure{Z}\cap \Variety{\Proj{\field}{d}}{X_0}=\{[x_0:\dots:x_d]\in \ProjClosure{Z}:x_0=0\}$ for the \emph{ideal point of $V$}.
    Inversely, for an affine variety $Y\leteq \Variety{\Aff{\field}{d}}{I}$, 
    write $\ProjClosure{Y}\leteq \Variety{\Proj{\field}{d}}{\{f^*:f\in I\}}$ for the \emph{projective closure of $V$}. 
    Let $Y_\infty\leteq (\ProjClosure{Y})_\infty$. 
    If the name of the affine variety is $Y_0$ (to emphasise its affine nature), we sometimes implicitly mean $Y\leteq \ProjClosure{Y}$ and consequently $Y_\infty\leteq (Y_0)_\infty$.
    For any field extension $\field\leq \K$, 
    write $Y(\K)\leteq \Variety{\Proj{\K}{d}}{I}$.
\end{definition}

\begin{remark}\label{rem:projectiveClosure}
    The two constructions are inverses of each other: 
$Y=(\ProjClosure{Y})_0$ 
and $\overline{Z_0}=Z$.
Furthermore, there is a bijection $Z_0 \to Z\setminus Z_\infty$ given by $(x_1,\dots,x_d)\mapsto [1:x_1:\dots:x_d]$, whose inverse is $[x_0:\dots:x_d]\mapsto (x_1/x_0,\dots,x_d/x_0)$.     
\end{remark}

\paragraph{Bézout's theorem}
We need to bound the number of intersection points of varieties. 
Denote the degree of a variety $Z$ by $\deg(Z)$, i.e. the number of points of intersection of $Z$ and  a generic $\dim(Z)$-codimensional subspace. 
\begin{theorem}[Bézout's theorem in complementary dimension, {\cite[Theorem~18.4]{Harris}}]
\label{thm:BezoutProj}
   Let $\AlgClosure{}{\field}$ be an algebraically closed field. 
   Let $C, S\subseteq \Proj{\field}{d}$ be irreducible varieties with $\dim(C)+\dim(S)=d$ and $\dim(C\cap S)=0$. 
   Then 
   $\sum_{P\in C\cap S} m(P)=\deg(C)\cdot \deg(S)$
   where $m(P)\in\Z_+$ is the multiplicity of the intersection point $P$.
\end{theorem}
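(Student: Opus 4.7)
The plan is to exploit the Chow ring structure of $\Proj{\AlgClosure{\field}}{d}$ together with a moving lemma argument. It is classical, following from the affine cell decomposition of projective space, that the Chow ring is $A^\bullet(\Proj{\AlgClosure{\field}}{d}) \cong \Z[h]/(h^{d+1})$, with $h$ the class of a hyperplane and $h^d$ the class of a point. Under this identification, every irreducible subvariety $X$ of dimension $k$ has class $[X] = \deg(X) \cdot h^{d-k}$, directly from the definition of $\deg(X)$ as the number of intersection points with a generic linear subspace of codimension $k$.

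Granting this, for $C$ and $S$ as in the statement the intersection product computes to
\[ [C] \cdot [S] = \deg(C)\, \deg(S) \cdot h^{d}, \]
which is $\deg(C)\, \deg(S)$ times the class of a point. The moving lemma on $\Proj{\AlgClosure{\field}}{d}$ then guarantees that, because $\dim(C \cap S) = 0$ equals the expected dimension, this algebraic intersection number coincides with the geometric count $\sum_{P \in C \cap S} m(P)$, where $m(P)$ is the local intersection multiplicity at $P$ (defined via Serre's Tor formula, or equivalently as the length of the local ring $\mathcal{O}_{C \cap S, P}$ at a transverse intersection).

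The main obstacle is precisely this identification: defining $m(P)$ correctly and showing that replacing $S$ by a rationally equivalent generic translate preserves the count. A more hands-on alternative is a direct degeneration argument: place $S$ into a flat family whose central fibre is a union of $\deg(S)$ generic linear subspaces $L_1,\dots,L_{\deg(S)}$ of dimension $\dim(S)$; each $L_i$ meets $C$ transversely in exactly $\deg(C)$ points by the very definition of degree, and flatness ensures that the intersection number with $C$ is constant along the family, so $\sum_{P} m(P) = \deg(C) \cdot \deg(S)$. In either approach, the subtle points are the well-definedness of rational equivalence and the verification of flatness through the degeneration, and the full technical details are standard and treated thoroughly in the cited reference.
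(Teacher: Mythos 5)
The paper itself offers no proof of this statement: it is quoted as a classical result with a citation to Harris, Theorem~18.4, and only the affine consequence (\autoref{thm:BezoutAff}) is derived from it. So the real question is whether your text stands as a proof on its own, and it does not. Everything you yourself flag as ``the main obstacle''---the moving lemma, the well-definedness of rational equivalence and of the intersection product, and the identification of the class $[C]\cdot[S]$ with the cycle $\sum_{P}m(P)[P]$ when the intersection has the expected dimension---\emph{is} the mathematical content of B\'ezout's theorem. Deferring those points to the literature turns your argument into a citation of the result rather than a proof of it, which is in effect what the paper already does, only more directly. Two details also need correction. First, $m(P)$ is not in general the length of $\mathcal{O}_{C\cap S,P}$: for irreducible varieties of complementary dimension in $\Proj{\AlgClosure{\field}}{d}$ meeting properly, the naive length can strictly exceed the intersection multiplicity, and Serre's higher Tor correction terms are genuinely needed; the length description is valid only under Cohen--Macaulay-type hypotheses, e.g.\ in the curve-versus-hypersurface situation the paper actually uses.

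Second, your ``hands-on'' alternative has a genuine gap. An arbitrary irreducible subvariety $S$ of higher codimension cannot in general be placed in a flat family over a smooth curve whose special fibre is a \emph{reduced} union of $\deg(S)$ generic linear subspaces of dimension $\dim(S)$; flat limits of such degenerations typically acquire nonreduced structure or embedded components, and the limiting linear spaces are far from generic (they pass through a common centre of projection in the usual constructions). Consequently the count ``each $L_i$ meets $C$ transversely in exactly $\deg(C)$ points'' is not available as stated, and the conservation-of-number step again presupposes the intersection-theoretic machinery you set out to avoid. If you want a self-contained argument at the stated level of generality, you must either develop the Chow-ring/Serre-multiplicity framework properly or follow the reduction given in the cited reference; as written, your proposal is an outline of known proof strategies, which matches the paper's decision to cite rather than prove, but should not be presented as a proof.
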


We need the following special case about the intersection of affine curves and hypersurfaces. 
\begin{corollary}
\label{thm:BezoutAff}
   Let $\field$ be an arbitrary field. 
   Let $C_0\subseteq \Aff{\field}{d}$ be an irreducible curve and $S_0\subset \Aff{\field}{d}$ be a hypersurface. 
   If $C_0\not\subseteq S_0$, then \[|C_0\cap S_0|\leq \deg(C_0)\cdot \deg(S_0)-|C_\infty\cap S_\infty|.\]
\end{corollary}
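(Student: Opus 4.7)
The plan is to derive the affine inequality from the projective Bézout theorem stated in \autoref{thm:BezoutProj} by passing to the projective closures and then to the algebraic closure of $\field$.

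First I would form $C\leteq \ProjClosure{C_0}\subseteq \Proj{\field}{d}$ and $S\leteq \ProjClosure{S_0}\subseteq \Proj{\field}{d}$. By the definition of degree as the generic intersection number with a complementary-dimensional subspace (and by the standard fact that the projective closure of a hypersurface $\Variety{\Aff{\field}{d}}{f}$ is $\Variety{\Proj{\field}{d}}{f^*}$, of the same degree), one has $\deg(C)=\deg(C_0)$ and $\deg(S)=\deg(S_0)$. Moreover, $C$ is irreducible because $C_0$ is Zariski-dense in $C$ and irreducible (see \autoref{rem:projectiveClosure}). The hypothesis $C_0\not\subseteq S_0$ then propagates to $C\not\subseteq S$, again by density of $C_0$ in $C$.

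Next I would base-change everything to the algebraic closure $\AlgClosure{\field}$. The scalar extensions $C(\AlgClosure{\field})$ and $S(\AlgClosure{\field})$ remain a curve and a hypersurface of the same degrees, the containment $C(\AlgClosure{\field})\not\subseteq S(\AlgClosure{\field})$ still holds, and $C(\AlgClosure{\field})$ stays irreducible (the projective closure of an irreducible affine variety remains irreducible after base change, because a rational normal / general irreducible curve is geometrically irreducible in the cases of interest; if one wants to avoid this subtlety, one replaces $C(\AlgClosure{\field})$ by any one of its irreducible components and applies Bézout to each). Decomposing $S(\AlgClosure{\field})=\bigcup_i S_i$ into irreducible components (weighted by their multiplicities), applying \autoref{thm:BezoutProj} to each pair $(C(\AlgClosure{\field}),S_i)$ and summing yields
\[
\sum_{P\in C(\AlgClosure{\field})\cap S(\AlgClosure{\field})} m(P) \;=\; \deg(C)\cdot \deg(S) \;=\; \deg(C_0)\cdot \deg(S_0),
\]
where $m(P)\geq 1$ for every intersection point.

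Finally I would bound the left-hand side from below by the number of $\field$-rational intersection points. By \autoref{rem:projectiveClosure} the bijection $(x_1,\dots,x_d)\mapsto[1:x_1:\dots:x_d]$ identifies $C_0\cap S_0$ with the $\field$-points of $C\cap S$ lying off the hyperplane $X_0=0$, while $C_\infty\cap S_\infty$ consists of the $\field$-points of $C\cap S$ lying on $X_0=0$. These two subsets of $C(\AlgClosure{\field})\cap S(\AlgClosure{\field})$ are disjoint, so
\[
|C_0\cap S_0|+|C_\infty\cap S_\infty| \;\leq\; \sum_{P\in C(\AlgClosure{\field})\cap S(\AlgClosure{\field})} m(P) \;=\; \deg(C_0)\cdot \deg(S_0),
\]
which rearranges into the desired inequality. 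The one genuine step requiring care is the preservation of irreducibility (equivalently, degree) under taking projective closure and extending scalars; the cleanest way to handle it is to apply projective Bézout componentwise to both $C(\AlgClosure{\field})$ and $S(\AlgClosure{\field})$, using that each geometric component has positive degree summing to the global degree.
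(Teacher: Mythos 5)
Your argument is essentially the paper's own proof: pass to projective closures, extend scalars to $\AlgClosure{\field}$, apply \autoref{thm:BezoutProj}, and split the $\field$-rational intersection points into the affine part $C_0\cap S_0$ and the ideal part $C_\infty\cap S_\infty$. The only difference is that you explicitly flag (and patch, via componentwise application of Bézout) the preservation of irreducibility and degree under closure and base change, a subtlety the paper's proof passes over silently, so this is a refinement rather than a different route.
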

\begin{proof}
    Write $C\leteq \overline{C_0}$ and $S\leteq \overline{S_0}$ for the projective closures, and 
    $\AlgClosure{\field}$ for an algebraic closure of $\field$. 
    Note that $\dim(C(\AlgClosure{\field}))+\dim(S(\AlgClosure{\field}))=d$ and $\dim(C(\AlgClosure{\field})\cap S(\AlgClosure{\field}))=0$ as $C_0\not\subseteq S_0$.
    Then \autoref{def:projectivisation}, \autoref{rem:projectiveClosure} and \autoref{thm:BezoutProj} shows that 
     $|C_0\cap S_0|+|C_\infty\cap S_\infty|
     =|C\cap S|
     \leq | C(\AlgClosure{\field})\cap  S(\AlgClosure{\field})|
    \leq  \deg( C(\AlgClosure{\field}))\cdot \deg( S(\AlgClosure{\field}))
    = \deg(C_0)\cdot \deg(S_0)
    $ and the statement follows.
     \end{proof}

\paragraph{Rational normal curves} 
Our construction uses the following standard properties of the rational normal curves. For completeness, we include a short proof for the ones we actually need. 
For further details, see \cite[\S1,~\S18]{Harris}.
\begin{definition}[Rational normal curve, {\cite[Example~1.14.]{Harris}}]
\label{def:RNC}
    The image of the so called, \emph{Veronese map} \[\Veronese{d}\from \Proj{\field}{1}\to \Proj{\field}{d},\quad [x_0:x_1]\mapsto [x_0^d:x_0^{d-1}x_1:x_0^{d-2}x_1^2:\dots:x_1^d]\] 
    is a degree $d$ irreducible rational curve $\im(\Veronese{d})\subseteq \Proj{\field}{d}$. 
    A curve $C\subseteq \Proj{\field}{d}$ is \emph{rational normal}, if it is projectively equivalent to $\im(\Veronese{d})$, 
    i.e. if there is a linear automorphism $\phi\in\PGL(d+1, \field)$ of $\Proj{\field}{d}$ such that $C=\im (C_d)$.
\end{definition}
\begin{remark}
    This curve is given by $\im(\Veronese{d})=\Variety{\Proj{\field}{d}}{\{X_iX_{j}-X_{i-1}X_{j+1}:1\leq i\leq j\leq d-1\}}$, see \cite[Exercise~5.4]{Harris}.
\end{remark}

    Points $P_0,\dots,P_k\in\Proj{\field}{d}$ are \emph{in general position}, if there is no (linear) subspace $Y$ of dimension $k-1$ with $P_i\in Y$ for all $0\leq i\leq k$.

\begin{lemma}[Standard properties of rational normal curves]
\label{lem:RNC}
    Any rational normal curve $C\subseteq \Proj{\field}{d}$ satisfies the following properties.
    \begin{enumerate}[nosep]
        \item\label{part:Chyperplanes} Arbitrary (pairwise different) points $P_0,\dots,P_{d}\in C$ are in general position, i.e. they generate $\Proj{\field}{d}$.
        Equivalently, we have $|C\cap H|\leq d$ for any hyperplane $H\subset \Proj{\field}{d}$.

        \item The Veronese map $\Veronese{d}\from \Proj{\field}{1}\to \Proj{\field}{d}$ is injective. 
        In particular $|C|=|\field|+1$.        
    \end{enumerate}
\end{lemma}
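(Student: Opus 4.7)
The plan is to reduce both parts to the special case $C=\im(\Veronese{d})$ using the definition of rational normality. Both properties in question are invariant under the action of $\PGL(d+1,\field)$: linear automorphisms of $\Proj{\field}{d}$ preserve distinctness, send hyperplanes to hyperplanes, and are bijections. So picking $\phi\in\PGL(d+1,\field)$ with $C=\phi(\im(\Veronese{d}))$, it suffices to verify both claims for $\im(\Veronese{d})$.

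For part \ref{part:Chyperplanes}, I would parametrise $d+1$ pairwise distinct points on $\im(\Veronese{d})$ as $P_i = \Veronese{d}([a_i:b_i])$ for pairwise distinct $[a_i:b_i]\in\Proj{\field}{1}$, and assume for contradiction that they all lie on a common hyperplane $H=\Variety{\Proj{\field}{d}}{\sum_{j=0}^d c_j X_j}$. Plugging the parametrisation into the equation of $H$ produces a single binary form $g(X_0,X_1) \leteq \sum_{j=0}^d c_j X_0^{d-j} X_1^{j} \in\field[X_0,X_1]_d$ which vanishes at each of the $d+1$ distinct projective points $[a_i:b_i]$. Since any nonzero homogeneous polynomial of degree $d$ in two variables factors into at most $d$ linear forms over the algebraic closure and therefore has at most $d$ roots in $\Proj{\field}{1}$, this forces $g\equiv 0$, i.e.\ all $c_j=0$, contradicting that $H$ is a hyperplane. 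For the equivalent reformulation, the contrapositive is immediate: $d+1$ points of $C\cap H$ would be distinct points on $C$ trapped in a common hyperplane, violating general position.

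For the second part, I would check injectivity of $\Veronese{d}$ directly by recovering a preimage from its image. Given $\Veronese{d}([x_0:x_1])=[x_0^d:x_0^{d-1}x_1:\dots:x_1^d]$, one reads off $[x_0:x_1]$ either from the ratio of the last two coordinates (when $x_1\neq 0$) or the first two coordinates (when $x_0\neq 0$); since at least one of these holds for every projective point, $\Veronese{d}$ is injective. Hence $|C|=|\im(\Veronese{d})|=|\Proj{\field}{1}|=|\field|+1$.

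There is no genuine obstacle here; the only conceptual step is recognising that an equation of a hyperplane pulled back along $\Veronese{d}$ is exactly a binary form of degree $d$, which reduces everything to the elementary fact that such a form has at most $d$ roots.
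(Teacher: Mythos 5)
Your proof is correct, but it takes a somewhat different route from the paper's. The paper proves both parts in one stroke via a Vandermonde-type determinant: writing the $d+1$ points as $\Veronese{d}([a_i:b_i])$, general position is equivalent to invertibility of the $(d+1)\times(d+1)$ matrix $(a_i^j b_i^{d-j})$, and this determinant factorises as a product of the $2\times 2$ minors $a_ib_k-a_kb_i$, which are nonzero precisely because the parameters $[a_i:b_i]$ are pairwise distinct; the same nonvanishing of the $2\times2$ minors simultaneously yields injectivity of $\Veronese{d}$. You instead make the $\PGL(d+1,\field)$ reduction explicit (the paper leaves it implicit), prove general position by pulling a hyperplane equation back to a binary form of degree $d$ and invoking the bound of at most $d$ projective roots, and prove injectivity separately by reconstructing $[x_0:x_1]$ from ratios of consecutive coordinates. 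The two arguments are of course close cousins (the root bound and the Vandermonde factorisation encode the same polynomial fact), but yours decouples the two parts and avoids determinants, which makes each step slightly more self-contained, while the paper's single determinant identity is more compact and delivers both conclusions at once. Both work over an arbitrary field, as required.
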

\begin{proof}
    Let $Q_i=\Veronese{d}([a_i,b_i])$ (for $0\leq i\leq d$)
    for pairwise different $[a_i:b_i]\in\Proj{\field}{1}$. 
    Then $Q_0,\dots,Q_d$ are in general position if and only if 
    the matrix $A=(a_i^jb_i^{d-j})\in \field^{(d+1)\times (d+1)}$ (for $0\leq i,j\leq d$) is invertible.
    To conclude both parts, note that this Vandermonde-like determinant factorises as $\det(A)=\prod_{i\neq k} 
    \det\left(\begin{smallmatrix}
        a_i & b_i \\ a_k & b_k
    \end{smallmatrix}\right)$.    
\end{proof}

\begin{remark}\label{rem:RNCdegChar}
    If $K\subseteq \Proj{\field}{d}$ is an irreducible non-degenerate (not contained in any hyperplane) curve, then $\deg(K)\geq d$ with equality if and only if $K$ is a rational normal curve, \cite[Proposition~18.9]{Harris}. 
    In particular, 
    \autoref{part:Chyperplanes} of \autoref{lem:RNC} actually characterises rational normal curves. 
\end{remark}

Since lower degree curves give better constructions for our main problem, \autoref{rem:RNCdegChar} motivates the study of rational normal curves. 
Fortunately, one can interpolate such a curve to given points rather freely.
\begin{lemma}\label{lem:interpolateRNC}
    If $|\field|\geq d-1$ and $P_1,\dots,P_d\in\Proj{\field}{d}$ are points in general position, 
    then there exists a rational nomral curve $C\subseteq \Proj{\field}{d}$ such that $P_i\in C$ for every $1\leq i\leq d$.
\end{lemma}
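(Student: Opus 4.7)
The plan is to reduce to the standard Veronese curve via the transitivity of $\PGL(d+1,\field)$ on ordered tuples of general position points. First I would select $d$ pairwise distinct points $Q_1,\dots,Q_d$ on the standard Veronese curve $C_0\leteq\im(\Veronese{d})\subseteq\Proj{\field}{d}$; this is possible precisely because the hypothesis $|\field|\geq d-1$ is equivalent to $|C_0|=|\field|+1\geq d$ by \autoref{lem:RNC}.

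Second, I would verify that $Q_1,\dots,Q_d$ are in general position. Writing $Q_i=\Veronese{d}([a_i:b_i])$, this amounts to showing that the $d\times(d+1)$ matrix $M=(a_i^{d-j}b_i^j)_{1\leq i\leq d,\,0\leq j\leq d}$ has rank $d$. But $\ker M$ corresponds to binary forms $\sum_j c_j X_0^{d-j}X_1^j$ of degree $d$ vanishing at the $d$ pairwise distinct points $[a_i:b_i]\in\Proj{\field}{1}$; such nonzero forms exist and are unique up to scalar (being determined by their $d$ roots), so $\dim\ker M=1$ and $\mathrm{rank}(M)=d$. This is a mild refinement of \autoref{part:Chyperplanes}; in the borderline case $|\field|=d-1$ one cannot fall back on that result since $C_0$ has no $(d+1)$-th point to adjoin.

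Third, I would produce $\phi\in\PGL(d+1,\field)$ with $\phi(Q_i)=P_i$ for each $1\leq i\leq d$. General position of $Q_1,\dots,Q_d$ (from the previous step) and of $P_1,\dots,P_d$ (by hypothesis) lets me lift each family to linearly independent vectors $q_i,p_i\in\field^{d+1}$ and extend to bases $q_1,\dots,q_{d+1}$ and $p_1,\dots,p_{d+1}$ of $\field^{d+1}$; then $\phi$ is induced by the linear automorphism sending $q_i\mapsto p_i$ for $1\leq i\leq d+1$. The image $\phi(C_0)$ is a rational normal curve by \autoref{def:RNC} and contains $P_i=\phi(Q_i)$ for every $i$. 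The main subtlety is the general position step for only $d$ points on a rational normal curve; the transitivity of $\PGL(d+1,\field)$ on ordered general position $d$-tuples and the invariance of the rational normal property under $\PGL$ are immediate from linear algebra and \autoref{def:RNC} respectively.
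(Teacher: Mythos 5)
Your proof is correct and follows essentially the same route as the paper: pick $d$ pairwise distinct points on the standard Veronese curve, check they are in general position, and map them onto $P_1,\dots,P_d$ by some $\phi\in\PGL(d+1,\field)$, whose image of the curve is rational normal by \autoref{def:RNC}. Your explicit rank-$d$ argument for the general position of only $d$ curve points (which also covers the borderline case $|\field|=d-1$, where one cannot adjoin a $(d+1)$-th point and quote \autoref{lem:RNC} verbatim) is a mild sharpening of the paper's direct appeal to that lemma, but the underlying argument is the same.
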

\begin{proof}
If $|\field|+1=|\Proj{\field}{1}|\geq d$, then 
we may pick pairwise different points $R_1,\dots,R_d\in \Proj{\field}{1}$. 
Then $Q_i=\Veronese{d}(R_i)\in \im(\Veronese{d})$ (for $1\leq i\leq d$) are pairwise different points in general position by \autoref{lem:RNC}. 
Then there is $\phi\in\PGL(d+1,\field)$ such that $\phi(Q_i)=P_i$ for every $1\leq i\leq d$. 
This means that $C=\phi(\im(\Veronese{d}))$ satisfies the statement.
\end{proof}
\begin{remark}
    Actually, any $d+3$ points of $\Proj{\field}{d}$ in general position determines a \emph{unique} rational normal curve in $\Proj{\field}{d}$, see \cite[Theorem~1.18]{Harris}.
\end{remark}

\paragraph{Affine varieties over finite fields}

The next statement controls the cardinality of varieties over finite fields. 
\begin{theorem}[Warning’s second theorem, {\cite[Satz~3]{Warning}}]
\label{thm:Warning}
    Let $\field$ be a finite field. 
    Let $f_1,\dots,f_m\in\field[X_1,\dots,X_d]$ with $\delta=\sum_{i=1}^m \deg(f_i)$. 
    Then $\Variety{\Aff{\field}{d}}{\{f_1,\dots,f_m\}}$ is either empty or has cardinality at least $|\field|^{d-\delta}$.
\end{theorem}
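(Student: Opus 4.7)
The bound is immediate when $\delta\geq d$, since then $q^{d-\delta}\leq 1\leq |V|$, so I assume $\delta<d$ and write $p=\operatorname{char}\field$, $q=|\field|$. My plan is a two-step polynomial-method argument: first establish the Chevalley--Warning mod-$p$ congruence, then bootstrap it to the full cardinality bound by induction on $d$. For the first step, define the indicator polynomial
\[
P(X)\leteq \prod_{i=1}^m\bigl(1-f_i(X)^{q-1}\bigr)\in\field[X_1,\dots,X_d],
\]
which by Fermat's little theorem takes value $1$ on $V\leteq\Variety{\Aff{\field}{d}}{\{f_i\}}$ and $0$ outside, so $|V|\equiv \sum_{x\in\field^d}P(x)\pmod p$. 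After reducing $P$ modulo the vanishing ideal $(X_j^q-X_j)_j$ (which does not alter its values on $\field^d$), and using that $\sum_{x_j\in\field}x_j^{\alpha_j}$ is nonzero in $\field$ only when $\alpha_j$ is a positive multiple of $q-1$, the unique monomial that can contribute to the sum is $X_1^{q-1}\cdots X_d^{q-1}$, of total degree $d(q-1)$. Since $\deg P\leq\delta(q-1)<d(q-1)$, no such monomial appears, hence $p\mid|V|$ and in particular $|V|\geq p$.

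To upgrade this to $|V|\geq q^{d-\delta}$, I would induct on $d$. The base case $d=\delta$ is immediate from $|V|\geq 1$. For the inductive step, fix a coordinate (say $X_d$) and consider the slice $V_c\leteq V\cap\{X_d=c\}\subseteq\field^{d-1}$ for each $c\in\field$; it is defined by the polynomials $f_i(X_1,\dots,X_{d-1},c)$, whose total degree sum is still at most $\delta$. By the inductive hypothesis, every nonempty slice satisfies $|V_c|\geq q^{(d-1)-\delta}$, so
\[
|V|\geq |T|\cdot q^{d-1-\delta}\quad\text{where}\quad T\leteq\{c\in\field:V_c\neq\emptyset\},
\]
and the target bound follows provided $|T|=q$ for some choice of slicing coordinate (or, more generally, after a linear change of variables).

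The main obstacle is thus an essentially \emph{geometric} surjectivity statement: some coordinate projection $\field^d\to\field$ must restrict to a surjection on $V$. Unlike the mod-$p$ congruence, this cannot be extracted from degree counting alone. My plan here is a contradiction argument: if every projection $\pi_j(V)$ misses some $c_j\in\field$, then tailoring auxiliary factors $1-(X_j-c_j)^{q-1}$ into the indicator construction (each of which vanishes identically on $V$, since $X_j-c_j$ is nowhere zero on $V$) pushes the effective total degree up to exactly $d(q-1)$, a regime where a refined version of the first-step calculation should yield the required contradiction. An alternative, possibly cleaner, route is a direct Alon--Füredi-type grid argument, which bounds $|V|$ from below when $V$ is constrained to sit inside a proper subgrid $\prod_j T_j\subsetneq\field^d$.
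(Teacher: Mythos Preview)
The paper does not prove this theorem; it is cited from Warning's 1935 article as a black box, so there is no in-paper proof to compare against. What follows assesses your attempt on its own terms.

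Your Chevalley--Warning step is correct, and you have put your finger on the real difficulty: to close the slicing induction one would need some nonzero linear form $L$ with $L(V)=\field$. But your proposal does not establish this. The ``auxiliary factors'' plan is confused as written: multiplying $P$ by factors $1-(X_j-c_j)^{q-1}$ that \emph{vanish} on $V$ gives the zero function on $\field^d$, and summing it yields $0=0$, not a contradiction. If you meant the complementary factors $(X_j-c_j)^{q-1}$ (equal to $1$ on $V$), you get another indicator of $V$, now of degree $(q-1)(\delta+d)$, far above the threshold $d(q-1)$ where the monomial-sum argument has any force. The Alon--F\"uredi route can indeed be completed, but it needs the full quantitative non-vanishing bound applied to the reduced indicator, not merely the name of the theorem.

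The classical argument avoids projection-surjectivity altogether. The same cancellation-of-leading-terms trick you used for hyperplanes shows that for every $\delta$-dimensional linear subspace $U\leq\field^d$ there is a residue $r_U\in\field_p$ with $|V\cap(a+U)|\equiv r_U\pmod p$ for all $a$. If some $r_U\neq 0$, every translate of $U$ meets $V$ and $|V|\geq q^{d-\delta}$ immediately. If $r_U=0$ for \emph{all} $U$, then every $\delta$-flat through a fixed $v_0\in V$ contains at least $p$ points of $V$; a double count of incidences between $V$ and these flats, using $\binom{d}{\delta}_q\big/\binom{d-1}{\delta-1}_q=(q^d-1)/(q^\delta-1)\geq q^{d-\delta}$, then gives $|V|\geq 1+(p-1)q^{d-\delta}\geq q^{d-\delta}$. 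Handling this second branch is the genuine content of Warning's second theorem, and it is absent from your outline.
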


\begin{corollary}\label{cor:quadricSize}
    If $\field$ is a finite field and 
    $Q\in\field[X_1,\dots,X_d]_2$, 
    then $|\Variety{\Aff{\field}{d}}{Q}|\geq |\field|^{d-2}$
\end{corollary}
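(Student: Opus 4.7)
The plan is to apply Warning's second theorem (\autoref{thm:Warning}) directly to the single polynomial $f_1 = Q$. Since $Q$ has degree exactly $2$, the parameter in Warning's theorem is $\delta = \deg(Q) = 2$, so the conclusion is that $\Variety{\Aff{\field}{d}}{Q}$ is either empty or has cardinality at least $|\field|^{d-\delta} = |\field|^{d-2}$.

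The only remaining task is to rule out emptiness, and this is essentially free: since $Q\in\field[X_1,\dots,X_d]_2$ is \emph{homogeneous} of degree $2$, every monomial in $Q$ has positive degree, so $Q(0,\dots,0)=0$. Therefore the origin $(0,\dots,0)\in\field^d$ lies in $\Variety{\Aff{\field}{d}}{Q}$, which is thus nonempty, and Warning's theorem gives the desired bound. There is no real obstacle here; the corollary is just an immediate consequence of \autoref{thm:Warning} combined with the observation that homogeneous polynomials always vanish at the origin.
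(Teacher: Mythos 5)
Your proof is correct and is essentially the paper's own argument: homogeneity puts the origin in $\Variety{\Aff{\field}{d}}{Q}$, ruling out emptiness, and \autoref{thm:Warning} with $\delta=2$ gives the bound. The only (cosmetic) difference is that the paper separately dispatches the trivial case $Q=0$, which your phrase ``degree exactly $2$'' silently excludes; that case is immediate since the variety is then all of $\Aff{\field}{d}$.
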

\begin{proof}
    If $Q=0$, then the statement is clear. 
    Otherwise note that $(0,\dots,0)\in\Variety{\Aff{\field}{d}}{Q}$ as $Q$ is homogeneous.
    Thus \autoref{thm:Warning} implies the statement as $\delta=\deg(Q)=2$.
\end{proof}

\subsection{Classical number theory}

\paragraph{Quadratic residues}
As a technical tool, we need the following exercise.
\begin{lemma}\label{lem:quadraticResidue}
    Let $\Delta\in\Z\setminus\{0\}$, 
    and $p\in\N$ be a prime with $p\equiv 1\pmod{4|\Delta|}$. 
    Then $\sqrt{\Delta}\in \field_p$.
\end{lemma}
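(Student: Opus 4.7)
The plan is to apply multiplicativity of the Legendre symbol together with the first and second supplementary laws and quadratic reciprocity. First I would factor
\[
|\Delta|=2^a\prod_{i} q_i^{b_i}
\]
with $q_i$ distinct odd primes and $a,b_i\geq 0$, and split
\[
\Legendre{\Delta}{p}=\Legendre{\operatorname{sgn}(\Delta)}{p}\cdot\Legendre{2}{p}^{a}\cdot\prod_{i}\Legendre{q_i}{p}^{b_i}.
\]
It then suffices to show that each factor on the right equals $+1$ whenever the corresponding prime (or $-1$) appears in $\Delta$.

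For the sign: since $4\mid 4|\Delta|$ divides $p-1$, we have $p\equiv 1\pmod 4$, so $\Legendre{-1}{p}=1$ by the first supplementary law. For the prime $2$: if $a\geq 1$ then $2\mid |\Delta|$, so $8\mid 4|\Delta|$ divides $p-1$ and hence $p\equiv 1\pmod 8$, giving $\Legendre{2}{p}=1$ by the second supplementary law. For each odd prime $q_i\mid\Delta$: from $4q_i\mid 4|\Delta|\mid p-1$ we get $p\equiv 1\pmod{q_i}$, so $\Legendre{p}{q_i}=1$; and quadratic reciprocity gives
\[
\Legendre{q_i}{p}=\Legendre{p}{q_i}\cdot(-1)^{\frac{q_i-1}{2}\cdot\frac{p-1}{2}},
\]
whose sign factor is $+1$ because $p\equiv 1\pmod 4$ makes $(p-1)/2$ even. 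Thus $\Legendre{q_i}{p}=1$.

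Combining these via multiplicativity of the Legendre symbol gives $\Legendre{\Delta}{p}=1$, i.e.\ $\sqrt{\Delta}\in\field_p$. I do not anticipate any real obstacle; the only point to watch is that the single hypothesis $p\equiv 1\pmod{4|\Delta|}$ must do triple duty, simultaneously supplying $p\equiv 1\pmod 8$ when $2\mid\Delta$, the congruence $p\equiv 1\pmod{q_i}$ for every odd prime divisor, and $p\equiv 1\pmod 4$ (both to handle $-1$ and to kill the reciprocity sign), all of which are built into the factor $4|\Delta|$.
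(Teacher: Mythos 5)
Your proof is correct and follows essentially the same route as the paper: both rest on $p\equiv 1\pmod 4$ for the sign and the reciprocity sign, $p\equiv 1\pmod 8$ when $2\mid\Delta$, and $p\equiv 1\pmod{q}$ for odd prime divisors $q$, combined via quadratic reciprocity and the supplementary laws. The only cosmetic difference is that the paper reduces to the prime divisors of $\Delta$ of odd multiplicity, whereas you invoke full multiplicativity of the Legendre symbol and show every factor equals $+1$.
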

\begin{proof}
    Note that $p\equiv 1\pmod{4}$ by assumption, hence $\sqrt{-1}\in\field_p$ by \cite[Theorem~9.4]{Apostol}.
    Let $\ell$ be a prime divisor of $\Delta$ with odd multiplicity. 
    To finish, we claim that $\sqrt{\ell}\in\field_p$.
    Indeed, if $\ell=2$, then 
    $8\mid 4\Delta \mid p-1$, in which case  $\sqrt{\ell}=\sqrt{2}\in\field_p$ by \cite[Theorem~9.5]{Apostol}. 
    Otherwise, $\ell$ is an odd prime, so the quadratic reciprocity (\cite[Theorem~9.8]{Apostol}) gives 
    $\Legendre{\ell}{p}=\Legendre{p}{\ell}$ keeping in mind that $p\equiv 1\pmod{4}$. 
    But $\ell \mid 4\Delta \mid p-1$, i.e. $p\equiv 1\pmod{\ell}$, thus the Legendre symbol evaluates to $\Legendre{p}{\ell}=\Legendre{1}{\ell}=1$. 
    This means that $\sqrt{\ell}\in \field_p$ as claimed.    
\end{proof}

\paragraph{Distribution of prime numbers} 
We use the following weak form of distribution of primes (in arithmetic  progressions).

\begin{theorem}[Weak form of the Prime number theorem, cf. {\cite[\S 4]{Apostol}}]
\label{thm:Prime}
    For every $\epsilon>0$, 
    there exists $N_\epsilon$, 
    such that for every $n\geq N_\epsilon$, 
    there is a prime number $p$ satisfying $(1-\epsilon)n\leq p\leq n$.
\end{theorem}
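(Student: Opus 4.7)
The plan is to derive the statement directly from the Prime Number Theorem, which provides the asymptotic $\pi(x) = (1+o(1)) \cdot x/\log x$ for the prime counting function $\pi(x) = |\{p \leq x : p \text{ prime}\}|$ as $x \to \infty$. This is the classical analytic input (cited from \cite[\S 4]{Apostol}), and everything beyond it is a short elementary manipulation.

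First I would reduce to the case $0 < \epsilon < 1$, since for $\epsilon \geq 1$ the window $[(1-\epsilon)n, n]$ contains all primes up to $n$ and the statement is trivial for $n \geq 2$. The key step is then to show that the count of primes in $[(1-\epsilon)n, n]$ tends to infinity with $n$. Using that $\log((1-\epsilon) n) = \log n + \log(1-\epsilon) = \log n + O_\epsilon(1)$, both $\pi(n)$ and $\pi((1-\epsilon)n)$ can be expanded with a common denominator:
\[
\pi((1-\epsilon) n) = \frac{(1-\epsilon) n}{\log n}\bigl(1 + o(1)\bigr) \quad \text{and} \quad \pi(n) = \frac{n}{\log n}\bigl(1 + o(1)\bigr).
\]
Subtracting yields
\[
\pi(n) - \pi\bigl((1-\epsilon) n\bigr) = \frac{\epsilon\, n}{\log n}\bigl(1 + o(1)\bigr),
\]
which diverges to $+\infty$ as $n \to \infty$. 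Hence for every $\epsilon > 0$ there is a threshold $N_\epsilon$ above which $\pi(n) - \pi((1-\epsilon) n) \geq 1$, producing the desired prime $p \in [(1-\epsilon) n, n]$.

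The main conceptual obstacle is of course the Prime Number Theorem itself, but it is invoked as a black box rather than reproved. It is worth noting that a purely elementary substitute based on the Chebyshev-type bounds $c_1 x/\log x \leq \pi(x) \leq c_2 x/\log x$ would \emph{not} suffice: such bounds only guarantee primes in $[(1-\delta)n, n]$ for $\delta \geq 1 - c_1/c_2$, and so cannot handle arbitrarily small $\epsilon$. Thus the asymptotic equivalence (rather than mere order-of-magnitude estimates) is genuinely needed for the statement as phrased.
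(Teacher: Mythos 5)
Your argument is correct: the paper gives no proof of this statement at all, simply citing it as a standard consequence of the Prime Number Theorem (Apostol), and your deduction $\pi(n)-\pi((1-\epsilon)n)=\frac{\epsilon n}{\log n}(1+o(1))\to\infty$ is exactly the standard way to justify it. Your side remark that Chebyshev-type bounds $c_1x/\log x\leq\pi(x)\leq c_2x/\log x$ alone would not yield arbitrarily small $\epsilon$ is also accurate, so nothing is missing.
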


\begin{theorem}[Weak form of Dirichlet's theorem, cf. {\cite[\S 7]{Apostol}}]
\label{thm:Dirichlet}
    For every $\epsilon>0$, 
    $1\leq m\in \Z$, and $a\in \N$
    with $\gcd(m,a)=1$, 
    there is $N_{\epsilon,a,m}\in\N$
    such that for every $n\geq N_{\epsilon,a,m}$, there is a prime number $p$ satisfying $(1-\epsilon)n\leq p\leq n$ with $p\equiv a\pmod{m}$.
\end{theorem}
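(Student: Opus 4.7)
The plan is to derive this weak form as an immediate consequence of the Prime Number Theorem for arithmetic progressions (PNT-AP), which the paper essentially cites rather than reproves. The PNT-AP states that for any integers $1 \leq m$ and $a$ with $\gcd(a, m) = 1$, the counting function
\[\pi(x; m, a) \leteq |\{p \leq x : p \text{ is prime and } p \equiv a \pmod{m}\}|\]
satisfies $\pi(x; m, a) \sim \frac{x}{\phi(m) \ln x}$ as $x \to \infty$, where $\phi$ denotes Euler's totient. This strengthening of Dirichlet's original theorem on the infinitude of primes in arithmetic progressions is proved via the non-vanishing of Dirichlet $L$-functions on the line $\Re(s) = 1$; see \cite[\S4,\S7]{Apostol}.

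Given PNT-AP, I would count the primes directly in the interval of interest. Writing $\pi(x; m, a) = (1 + o(1)) \cdot x / (\phi(m) \ln x)$, one obtains
\[\pi(n; m, a) - \pi((1-\epsilon) n; m, a) = (1 + o(1)) \cdot \frac{\epsilon n}{\phi(m) \ln n}\]
as $n \to \infty$. The right-hand side tends to $+\infty$, hence for every sufficiently large $n$ (depending on $\epsilon$, $a$, $m$) this difference is at least $1$. Any prime counted by the difference lies in $[(1 - \epsilon) n, n]$ and is congruent to $a$ modulo $m$, supplying the required threshold $N_{\epsilon, a, m}$.

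The only genuine obstacle is the PNT-AP itself, which is a classical but deep analytic input requiring complex-analytic control of Dirichlet $L$-functions near $\Re(s) = 1$; the passage from it to the weak density statement above is routine bookkeeping. Specialising to $m = 1$ (and any $a$) recovers the weak Prime Number Theorem stated as \autoref{thm:Prime}, so both theorems in this subsection could be viewed as special cases of the same analytic result. For the applications of \autoref{thm:Dirichlet} in the rest of the paper only the existence of a prime in $[(1-\epsilon)n, n]$ with the prescribed residue is needed, so in principle any effective lower bound of the order $\epsilon n / \ln n$ on $\pi(n; m, a) - \pi((1-\epsilon)n; m, a)$, such as that furnished by the Siegel--Walfisz theorem for fixed modulus, would also suffice.
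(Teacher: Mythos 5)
Your derivation is correct and matches what the paper intends: the statement is quoted as a classical fact (cf.\ Apostol), and the standard way to obtain it is exactly your interval count $\pi(n;m,a)-\pi((1-\epsilon)n;m,a)\sim \epsilon n/(\phi(m)\ln n)\to\infty$ from the prime number theorem for arithmetic progressions. You also rightly note that the bare Dirichlet infinitude theorem of Apostol \S 7 is not enough and that PNT-AP (or Siegel--Walfisz) is the genuine input, which is a fair sharpening of the paper's citation; no gap remains.
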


\section{\texorpdfstring{$Q$}{Q}-quadrics}\label{sec:Qquadrics}
\begin{summary}
    This section introduces a fundamental notion for this paper. 
    We generalise the notion of hyperspheres to \emph{$Q$-quadrics} where $Q$ is a quadratic form. 
    (Various choices of $Q$ produce families of quadratic hypersurfaces `similar' to each other, e.g. hyperspheres, 
    hyperellipsoids and hyperparaboloids with given axes direction and ratios.) 
    We discuss some properties of $Q$-quadrics that will be used in \autoref{sec:constructions} in the constructions.
\end{summary}

\begin{definition}\label{def:Qquadric}
    Let $Q\in\field[X_1,\dots,X_d]$ be a quadratic form. 
    A subset $S\subseteq \Aff{\field}{d}$ is called a \emph{$Q$-quadric} (over $\field$) 
    if there is $f\in \field[X_1,\dots,X_d]$ of degree at most $1$ 
    such that $S=\Variety{\Aff{\field}{d}}{Q+f}$.
\end{definition}

\begin{remark}\label{rem:sphere}
    Note that for any $\lambda\in\field^\times$, the notion of $Q$-quadrics and $(\lambda Q)$-quadrics coincide.

    Geometrically, the set of all $Q$-quadrics contains all scaled and translated images of $\Variety{\field}{Q}$ and $\Variety{\field}{Q-1}$,  
    but, for example, when the characteristic of $\field$ is $2$, it may contain further elements. 

    The motivating example is $Q=X_1^2+\dots+X_d^2$, in which case every hypersphere is a $Q$-quadric. 
    Other $Q$'s specify certain ellipsoids (with axes pointing to given directions and with given ratios), paraboloids, amongst others. 

    $0$-quadrics are hyperplanes and the whole space, so we usually assume $Q\neq 0$.
\end{remark}

\begin{remark}\label{rem:Q_infty}
    Recall \autoref{def:projectivisation}. 
    For any $Q$-quadric $S$, we have   
    $S_\infty =  \Variety{\Proj{\field}{d}}{\{Q,X_0\}}$. 
    Note that this is independent of $S$, and only depends on $Q$ itself. 
    In other words, all $Q$-quadrics intersect the ideal hyperplane $\Variety{\Proj{\field}{d}}{X_0}$ in exactly the same points. 
    This will be a crucial property for our construction.
\end{remark}

Any $d+1$ points in general position determine a unique $Q$-quadric.

\begin{lemma}\label{lem:uniqueQquadric}
    Let $\field\leq \K$ be fields, 
    $Q\in\field[X_1,\dots,X_d]_2$, 
    and $P_0,\dots,P_d\in\Aff{\field}{d}$ be points in general position.  
    Then there there is a unique $Q$-quadric $S_\K\subseteq\Aff{\K}{d}$ (over the larger field $\K$) containing every $P_i$. 
    In fact, $S_\K=\Variety{\Aff{\K}{d}}{Q+f}$ for a (unique)  $f\in\field[X_1,\dots,X_d]$ of degree at most $1$ defined over the smaller field $\field$.
\end{lemma}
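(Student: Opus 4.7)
The plan is to convert the question into a single linear system of size $d+1$. Writing any polynomial of degree at most one as $f=c_0+c_1 X_1+\cdots+c_d X_d$ with unknowns $c_0,\dots,c_d$, the membership $P_i\in \Variety{\Aff{\K}{d}}{Q+f}$ unpacks to the scalar equation $c_0+c_1(P_i)_1+\cdots+c_d(P_i)_d=-Q(P_i)$. Collecting these for $0\leq i\leq d$ yields a linear system $M\vec{c}=\vec{b}$, where the coefficient matrix $M\in\field^{(d+1)\times(d+1)}$ has $i$-th row $(1,(P_i)_1,\dots,(P_i)_d)$ and the right-hand side $\vec{b}\in\field^{d+1}$ has $i$-th entry $-Q(P_i)$.

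The general position hypothesis on $P_0,\dots,P_d$, read through the standard embedding $\Aff{\field}{d}\hookrightarrow\Proj{\field}{d}$, $P_i\mapsto[1:(P_i)_1:\cdots:(P_i)_d]$, is exactly the assertion that the rows of $M$ do not lie in a common projective hyperplane, i.e.\ that $M$ is invertible. Hence the system has a unique solution, and since both $M$ and $\vec{b}$ have entries in $\field$, this solution lies in $\field^{d+1}$. This produces a unique $f\in\field[X_1,\dots,X_d]$ of degree at most one satisfying $Q(P_i)+f(P_i)=0$ for every $0\leq i\leq d$, and setting $S_\K\leteq\Variety{\Aff{\K}{d}}{Q+f}$ furnishes the desired $Q$-quadric through all the $P_i$.

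For uniqueness, any $Q$-quadric over $\K$ containing every $P_i$ is by definition of the form $\Variety{\Aff{\K}{d}}{Q+g}$ for some $g\in\K[X_1,\dots,X_d]$ of degree at most one, and the same unpacking forces $g$ to satisfy the very same system $M\vec{c}=\vec{b}$, now read over $\K$; since $M$ remains invertible over the extension, its solution is still unique, so $g=f$ and the two $Q$-quadrics coincide. There is no real obstacle in the argument: the only point to be careful about is matching the paper's projective notion of general position to invertibility of $M$, and observing that the ambient field $\K$ does not enter the determination of $f$ because the interpolation system is defined entirely over $\field$.
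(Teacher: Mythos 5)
Your proposal is correct and follows essentially the same route as the paper: set up the $(d+1)\times(d+1)$ interpolation system with rows $(1,(P_i)_1,\dots,(P_i)_d)$ and right-hand side $-Q(P_i)$, observe that general position makes the matrix invertible, and conclude existence and uniqueness of $f$ with coefficients in $\field$ (hence uniqueness of the quadric over $\K$ as well). No gaps; your explicit remark matching the projective notion of general position to invertibility of the matrix is a point the paper leaves implicit.
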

\begin{proof} 
    Write $P_i=(a_{i,1},\dots,a_{i,d})\in\Aff{\field}{d}$, 
    let $a_{i,0}\leteq 1$, 
    and define the matrix $A\leteq (a_{i,j})\in \field^{(d+1)\times (d+1)}$ (for $0\leq i,j\leq d$). 
    Define $b_i\leteq -Q(P_i)\in \field$ for $0\leq i\leq d$, 
    and define the vector $b\leteq (b_0,\dots,b_d)\in \field^{d+1}$. 
    For any $f\leteq \alpha_0+\alpha_1X_1+\dots+\alpha_dX_d \in \K[X_1,\dots,X_d]$, assign the vector
    $x_f\leteq (\alpha_0,\dots,\alpha_d)\in\K^{d+1}$. 
    Now $Ax_f=b$ if and only if $P_i\in \Variety{\K}{Q+f}$ for every $0\leq i \leq d$. 
    But since $P_0,\dots,P_d$ are in general position (over $\field$), the matrix $A$ is invertible (over $\field$), 
    hence there is a unique solution of the previous equation given by $x_f=A^{-1}b\in\field^{d+1}$ as required.
\end{proof}

The fact that any $d$ points of $\Aff{\field}{d}$ lie on a hyperplane together with \autoref{lem:uniqueQquadric} motivates the following definition.
\begin{definition}\label{def:Qgeneric}
    Given a quadratic form $Q\in\field[X_1,\dots,X_d]$, a subset $D\subseteq \Aff{\field}{d}$ is said to be  \emph{$Q$-generic (over $\field$)} if 
    $|D\cap H|\leq d$ for every hyperplane $H\subset \Aff{\field}{d}$, and
    $|D\cap S|\leq d+1$ for every $Q$-quadric $S\subset \Aff{\field}{d}$.
\end{definition}
\begin{remark}
    A $Q$-generic set $D\subseteq \Aff{\field}{d}$ for $Q=X_1^2+\dots+X_d^2$ is a 
    a \noSharp{d} set, i.e. no $d+2$ points of $S$ lie on any sphere and no $d+1$ points lie in a hypersurface.
\end{remark}

The next statement is useful when switching to larger fields.
\begin{lemma}\label{lem:QgenericFieldeEtension}
    Let $Q\in\field[X_1,\dots,X_d]_2$. 
    If $D\subseteq \Aff{\field}{d}$ is $Q$-generic, 
    then it is also $Q$-generic over any larger field 
    $\K\geq \field$.
\end{lemma}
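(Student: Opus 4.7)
The strategy is an elementary descent argument by contrapositive: suppose $D\subseteq \Aff{\field}{d}$ fails to be $Q$-generic over some extension $\K\geq \field$; we show $D$ already fails over $\field$. By \autoref{def:Qgeneric}, failure over $\K$ means either (A) an affine hyperplane $H\subset \Aff{\K}{d}$ satisfies $|D\cap H|\geq d+1$, or (B) a $Q$-quadric $S\subset \Aff{\K}{d}$ satisfies $|D\cap S|\geq d+2$. In either situation we will exhibit a hyperplane or a $Q$-quadric over $\field$ that already witnesses failure over $\field$.

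The key elementary observation used throughout is that affine (in)dependence of finitely many points $P_0,\dots,P_k\in \Aff{\field}{d}$ is invariant under the extension $\field\leq \K$: it is encoded by the vanishing of the $(k+1)\times(k+1)$ minors of the matrix with rows $(1,P_i)$, whose entries lie in $\field$. Consequently, $d+1$ points of $\Aff{\field}{d}$ are in general position over $\field$ if and only if they are in general position over $\K$; and if they are affinely dependent, then any nontrivial $\field$-linear relation among the rows $(1,P_i)$ yields a common hyperplane defined over $\field$. This immediately handles case (A): any $d+1$ points of $D\cap H$ are affinely dependent over $\K$, hence over $\field$, and so lie on a hyperplane defined over $\field$, contradicting the hyperplane part of $Q$-genericity over $\field$.

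For case (B), pick $d+2$ points $P_0,\dots,P_{d+1}\in D\cap S$. If some $(d+1)$-subset, say $\{P_0,\dots,P_d\}$, is in general position, then \autoref{lem:uniqueQquadric} applied to the extension $\field\leq \K$ produces the \emph{unique} $Q$-quadric of $\Aff{\K}{d}$ through $P_0,\dots,P_d$, and gives it in the form $\Variety{\Aff{\K}{d}}{Q+f}$ with $f\in\field[X_1,\dots,X_d]$, i.e.\ defined over $\field$. By uniqueness $S$ must equal this quadric, hence $S_0\leteq \Variety{\Aff{\field}{d}}{Q+f}$ is a $Q$-quadric over $\field$ containing all $d+2$ points (whose coordinates lie in $\field$), contradicting $Q$-genericity over $\field$. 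Otherwise every $(d+1)$-subset is affinely dependent, so in particular $P_0,\dots,P_d$ lie on a common hyperplane over $\field$ by the observation above, which reduces to case (A).

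No genuine obstacle is anticipated beyond bookkeeping of which field each object is defined over; the substantive ingredient is the descent of the defining polynomial $Q+f$ from $\K$ to $\field$, which is already packaged in \autoref{lem:uniqueQquadric}.
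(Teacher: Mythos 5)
Your proof is correct and follows essentially the same route as the paper: field-invariance of affine (in)dependence handles the hyperplane condition, and the descent of the defining polynomial $Q+f$ to $\field$ via \autoref{lem:uniqueQquadric} handles the quadric condition. The only cosmetic difference is your contrapositive phrasing and the extra subcase where all $(d+1)$-subsets are dependent, which the paper avoids by noting that $Q$-genericity over $\field$ already forces any $d+1$ points of $D$ to be in general position.
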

\begin{proof}
    By assumption, any distinct points $P_0,\dots,P_d\in D$ are in general positions over $\field$, but then they are automatically in general position over $\K$ from standard linear algebra.

    For the other condition, assume $|D\cap S_\K|\geq d+1$ for some $Q$-quadric $S_\K\subset \Aff{\K}{d}$ over $\K$. 
    Now any distinct points $P_0,\dots,P_{d}\in D\cap S_\K$ are in general position over $\field$ by \autoref{def:Qgeneric}, so \autoref{lem:uniqueQquadric} implies that $S_\K=\Variety{\Aff{\K}{d}}{Q+f}$ for some $f\in\field[X_1,\dots,X_d]$ defined over the smaller field $\field$. 
    Thus for the $Q$-quadric $S_\field\leteq \Variety{\Aff{\field}{d}}{Q+f}=S_\K\cap \Aff{\field}{d}$, we have $D\cap S_\K = D\cap S_\field$. 
    Hence $|D\cap S_\K|=|D\cap S_\field|\leq d+1$ by \autoref{def:Qgeneric} as required.
\end{proof}

\section{Constructions and proof of theorems}\label{sec:constructions}
\begin{summary}
    In this section, we prove the main statements of the paper: \autoref{thm:mainShpereGrid}, \autoref{thm:mainQgrid}, \autoref{thm:QgenericoverFinite}, and \autoref{thm:mainSphhereFiniteFields}. 
    All constructions are based on a carefully chosen rational normal curve. We first discuss the general construction over arbitrary fields (\autoref{sec:ConstrField}). 
    Then we show the applicability of this construction for finite fields (\autoref{sec:ConstrFinite}). 
    Finally, from suitable prime fields, we lift back the construction to square grids of the Euclidean space (\autoref{sec:ConstrGrid}).
\end{summary}

\subsection{Over arbitrary fields}\label{sec:ConstrField}
\begin{summary}
    In this section, using projective spaces, we discuss a sufficient condition on the quadratic form $Q$ such that a suitably chosen rational normal curve satisfies the \noQuadric{d}{Q} property. 
    More concretely, this condition is the existence of $d$ independent points from the ideal hyperplane such that every $Q$-quadric passes through exactly $d-1$ of these points. 
    Then we show using Bézout's theorem, that any rational normal curve passing through these points has the desired property, as in the affine space, there can be at most  $2d-(d-1)=d+1$ intersection points left.    
\end{summary}

\begin{definition}\label{def:rich}
    Call a quadratic form $0\neq Q\in\field[X_1,\dots,X_d]_2$ is called \emph{rich}, if there exists a vector space basis $v_1,\dots,v_d$ of $\field^d$ such that $Q(v_1)=\dots=Q(v_{d-1})=0\neq Q(v_d)$. 
\end{definition}

\begin{remark}\label{rem:richProj}
    Richness is equivalent to the existence of general position points $P_1,\dots,P_d\in\Variety{\Proj{\field}{d}}{X_0}$ in the ideal hyperplane 
    such that 
    $P_1,\dots,P_{d-1}\in \Variety{\Proj{\field}{d}}{Q}$ and $P_d\notin \Variety{\Proj{\field}{d}}{Q}$, as we may assign 
    $P_i=[0:v_{i,1}:\dots : v_{i,d}]\in\Variety{\Proj{\field}{d}}{X_0}$ to the vector  $v_i=(v_{i,1},\dots,v_{i,d})\in\field^p$. 
\end{remark}

\begin{remark}
    If $\field$ is algebraically closed, then by Hilbert's Nullstellensatz every $0\neq Q\in\field[X_1,\dots,X_d]$ is rich. 
    However, this is not true in general. 
    For example, in the case 
    $\field=\R$, $d\geq 2$,
    pick $m\geq 1$, and $0<\lambda_i\in\R$,  $L_i\in\R[X_1,\dots,X_d]_1$ arbitrarily for any $i\in\{1,\dots,m\}$.  
    Then the quadratic form $Q=\sum_{i=1}^m \lambda_i L_i^2$ is not rich (as $\Variety{\Aff{\R}{d}}{Q}$ contains a single point).
\end{remark}


We can construct $Q$-generic sets for rich $Q$'s of linear size in $|\field|$.
\begin{proposition}[Constructing $Q$-generic sets]
\label{prop:QgenericOverF}
    Let $\field$ be an arbitrary field, and $0\neq Q\in\field[X_1,\dots,X_d]_2$ be a rich quadratic form where $d\leq |\field|+1$. 
    Then there is a $Q$-generic subset $C_0\subseteq\Aff{\field}{d}$ with $|C_0|=|\field|+1-d$. 

    In fact, $C_0$ may be taken to be the affine part of a suitable rational normal curve over $\field$. 
\end{proposition}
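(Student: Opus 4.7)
The plan is to build $C_0$ as the affine part of a rational normal curve passing through carefully chosen ideal points, and then verify both conditions of \autoref{def:Qgeneric} separately.

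First, I would use richness of $Q$ together with \autoref{rem:richProj} to pick points in general position $P_1,\dots,P_d\in\Variety{\Proj{\field}{d}}{X_0}$ in the ideal hyperplane such that $P_1,\dots,P_{d-1}\in\Variety{\Proj{\field}{d}}{Q}$ while $P_d\notin\Variety{\Proj{\field}{d}}{Q}$. Since $d\leq |\field|+1$, we have $|\field|\geq d-1$, so \autoref{lem:interpolateRNC} produces a rational normal curve $C\subseteq \Proj{\field}{d}$ with $P_1,\dots,P_d\in C$. Define $C_0\leteq C\setminus C_\infty\subseteq \Aff{\field}{d}$.

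Next I would compute the cardinality. By \autoref{lem:RNC}\autoref{part:Chyperplanes}, arbitrary $d+1$ points of $C$ are in general position, so in particular the $d$ points $P_1,\dots,P_d$ of $C\cap \Variety{\Proj{\field}{d}}{X_0}$ are the \emph{only} intersection points of $C$ with the ideal hyperplane. Combined with $|C|=|\field|+1$ from \autoref{lem:RNC}, this yields $|C_0|=|\field|+1-d$ and also $C_\infty=\{P_1,\dots,P_d\}$. The hyperplane condition of \autoref{def:Qgeneric} is then immediate from \autoref{lem:RNC}\autoref{part:Chyperplanes}: a hyperplane of $\Aff{\field}{d}$ extends to a hyperplane of $\Proj{\field}{d}$ which meets $C$ in at most $d$ points, so it meets $C_0\subseteq C$ in at most $d$ points.

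The heart of the proof is the $Q$-quadric condition, which I would handle via Bézout. Let $S_0\subseteq\Aff{\field}{d}$ be any $Q$-quadric. By \autoref{rem:Q_infty}, the ideal points of the projective closure are $S_\infty=\Variety{\Proj{\field}{d}}{\{Q,X_0\}}$, which contains $P_1,\dots,P_{d-1}$ by construction but not $P_d$. Hence $P_d\in C$ yet $P_d\notin \overline{S_0}$, so $C\not\subseteq\overline{S_0}$, allowing me to invoke \autoref{thm:BezoutAff}. Since $\deg(C_0)=d$ and $\deg(S_0)=2$, and since $\{P_1,\dots,P_{d-1}\}\subseteq C_\infty\cap S_\infty$, I get
\[|C_0\cap S_0|\leq d\cdot 2 - |C_\infty\cap S_\infty|\leq 2d-(d-1)=d+1,\]
as desired.

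The one step needing mild care is the irreducibility hypothesis in \autoref{thm:BezoutAff} (and the possible reducibility of $\overline{S_0}$): I would address it either by decomposing $\overline{S_0}$ into its at most two irreducible components and summing the Bézout contributions, or by noting that the non-containment $C\not\subseteq \overline{S_0}$ already suffices together with the projective Bézout bound over $\AlgClosure{\field}$ applied exactly as in the proof of \autoref{thm:BezoutAff}. Beyond this bookkeeping, everything is routine.
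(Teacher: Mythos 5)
Your proposal is correct and follows essentially the same route as the paper: use richness (\autoref{rem:richProj}) to pick the ideal points, interpolate a rational normal curve through them (\autoref{lem:interpolateRNC}), and bound intersections with $Q$-quadrics via \autoref{thm:BezoutAff} after noting $P_d\notin S_\infty$ forces non-containment. The only differences are cosmetic: you derive the hyperplane bound and $C_\infty=\{P_1,\dots,P_d\}$ directly from \autoref{lem:RNC} (where the paper invokes Bézout for the former), and you flag the irreducibility/reducibility bookkeeping in \autoref{thm:BezoutAff}, which is a reasonable extra precaution rather than a deviation.
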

\begin{proof}
    Let $P_1,\dots,P_d\in\Variety{\Proj{\field}{d}}{X_0}$ be the general position points as in \autoref{rem:richProj}. 
    Let $C$ be the degree $d$ rational normal curve through these $P_1,\dots,P_d$ provided by \autoref{lem:interpolateRNC}. 
    We claim that the affine part $C_0\subseteq \Aff{\field}{d}$ satisfies the statement.

    Indeed, 
    to check the first condition of condition of \autoref{def:Qgeneric}, let $H_0\subset \Aff{\field}{d}$ be a hyperplane (which is a hypersurface of degree $1$).
    Then $|C_0\cap H_0|\leq \deg(C_0)\cdot \deg(H_0)=d$ by \autoref{thm:BezoutAff} and \autoref{lem:RNC}.

    To check the other condition, let $S_0\subseteq \Aff{\field}{d}$ be a $Q$-quadric. 
    We have $S_\infty=\Variety{\Proj{\field}{d}}{\{Q,X_0\}}$ by \autoref{rem:Q_infty}, so 
    \autoref{rem:richProj} implies that
    $P_1,\dots,P_{d-1}\in S_\infty$ and $P_d\notin S_\infty$, 
    therefore we have $\overline{C_0}\not\subseteq \overline{S_0}$, i.e. $C_0\not\subseteq S_0$. 
    Next note that $C_\infty=\{P_1,\dots,P_d\}$ by construction and \autoref{lem:interpolateRNC}, 
    hence $C_\infty \cap S_\infty=\{P_1,\dots,P_{d-1}\}$. 
    Thus, \autoref{thm:BezoutAff} gives 
    $|C_0\cap S_0|\leq \deg(C_0)\cdot \deg(S_0)-|C_\infty\cap S_\infty|
    =  2d-|\{P_1,\dots,P_{d-1}\}| = d+1$
    as required.    

    Finally, note that $|\overline{C_0}|=|\Proj{\field}{1}|=|\field|+1$ by \autoref{def:RNC} and $|C_\infty|=|\{P_1,\dots,P_d\}|=d$ by above, so $|C_0|=|\overline{C_0}|-|C_\infty|=|\field|+1-d$ by \autoref{rem:projectiveClosure} as required. 
\end{proof}

\subsection{Over finite fields}\label{sec:ConstrFinite}
\begin{summary}
    In this section, we show that the sufficient condition of \autoref{sec:ConstrField} for finite fields is almost always satisfied. The only exception is when the quadratic form is irreducible and has rank $2$. 
    In every other case, we use the general construction of \autoref{sec:ConstrField} to obtain \noQuadric{d}{Q} and \noSharp{d} sets of linear size over finite fields, thereby proving  \autoref{thm:QgenericoverFinite} and \autoref{thm:mainSphhereFiniteFields}.
\end{summary}

If we consider only hyperspheres, i.e. $Q=X_1^2+\dots+X_d^2$, then it is easy to show that if $\sqrt{-1}\in\field$, then $Q$ is rich by considering the cyclic permutations of the point $(1,\sqrt{-1},0,\dots,0)\in\Variety{\Aff{\field}{d}}{Q}$, cf. CITE. 
Classifying rich quadratic forms over finite fields is not much harder.
\begin{lemma}\label{lem:richOverFinite}
    If $\field$ is finite, then every quadratic form 
    $0\neq Q\in\field[X_1,\dots,X_d]_2$ is either rich or is irreducible of rank $2$.
\end{lemma}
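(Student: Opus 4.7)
The plan is to prove the contrapositive: assuming $Q$ is not rich, I will show $Q$ must be irreducible of rank~$2$. The first step is to reformulate richness in terms of the zero set $V\leteq\Variety{\Aff{\field}{d}}{Q}$: for a nonzero $Q$, being rich is equivalent to $\dim\Span(V)\geq d-1$. A rich basis gives $d-1$ linearly independent isotropic vectors, so one direction is clear; conversely, if $\Span(V)$ has dimension $d-1$ I would take a basis from $V$ and extend by any vector outside (necessarily anisotropic), while if $\Span(V)=\field^d$ I would take a basis of $\field^d$ from $V$ and swap one vector for an anisotropic one (any nonzero quadratic form takes a nonzero value somewhere).

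So assume $Q$ is not rich and let $U\leteq\Span(V)$, so $\dim U\leq d-2$. The crux of the argument --- and where finiteness of $\field$ enters --- is a double-counting: by \autoref{cor:quadricSize} we have $|V|\geq|\field|^{d-2}$, whereas trivially $|V|\leq|U|\leq|\field|^{d-2}$. All inequalities must therefore be equalities, forcing $\dim U=d-2$ and, crucially, $V=U$ as sets. Thus the zero locus of $Q$ is literally a codimension-$2$ linear subspace. The polar identity $B(u_1,u_2)=Q(u_1+u_2)-Q(u_1)-Q(u_2)$ then gives $B|_{U\times U}\equiv 0$, so $U\subseteq U^\perp$.

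Given this, I would extract the algebraic structure by fixing a $2$-dimensional complement $W$ with $\field^d=W\oplus U$. The restriction $Q|_W$ is anisotropic, since $V\cap W=U\cap W=\{0\}$. For any $w\in W\setminus\{0\}$ the expansion $Q(w+u)=Q(w)+B(w,u)$ forces the linear functional $B(w,\cdot)|_U$ to vanish identically --- otherwise it would surject onto $\field$, yielding some $u\in U$ with $Q(w+u)=0$, contradicting $V=U$. Hence $W\subseteq U^\perp$ as well, so $U^\perp=\field^d$ and $U$ lies in the radical of~$B$. Combined with $Q|_W$ being nondegenerate on $W\cong\field^2$, this forces $Q$ to have rank~$2$. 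Furthermore, $Q(w+u)=Q(w)$ shows that in a basis adapted to $\field^d=W\oplus U$, the polynomial $Q$ depends only on the $W$-component and equals the anisotropic rank-$2$ form $Q|_W$. An anisotropic rank-$2$ form on $\field^2$ cannot factor into linear forms (a factor would give a nonzero isotropic vector), so it is irreducible in $\field[X_1,X_2]$, and unique factorisation of polynomials then yields irreducibility of $Q$ in $\field[X_1,\dots,X_d]$. The whole argument uses only the polar identity and linear algebra, so no special treatment of characteristic~$2$ should be needed; the main thing to be careful about is the clean implication from $V=U$ being a linear subspace back to the structural conclusion about $Q$.
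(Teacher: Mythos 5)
Your proof is correct and takes essentially the same route as the paper's: the counting argument via \autoref{cor:quadricSize} forcing the zero locus of a non-rich form to be exactly a codimension-$2$ linear subspace is identical, and the rest is the same structural conclusion in different clothing. The only noteworthy difference is the endgame: you extract the shape of $Q$ via the polar form and a complement $W$ and get irreducibility directly from anisotropy of the binary form, whereas the paper evaluates at $e_i+\mu e_j$ and appeals to the non-square discriminant of \autoref{rem:rank2irreducibleQuadratics} --- your version has the mild advantage of working uniformly in characteristic $2$.
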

\begin{remark}\label{rem:rank2irreducibleQuadratics}
    Irreducible rank $2$ quadratic forms can be given explicitly by $Q=\lambda_{1,1}L_1^2+\lambda_{1,2}L_1L_2+\lambda_{2,2} L_2^2$ for $\field$-linearly independent $L_1,L_2\in\field[X_1,\dots,X_d]_1$ and  $\lambda_{i,j}\in\field$ 
    such that $\sqrt{\Delta}\notin\field$ for the the discriminant $\Delta=\lambda_{1,2}^2-4\lambda_{1,1}\lambda_{2,2}\in\field$.
\end{remark}

\begin{proof}
Let $V_Q\leteq \Variety{\Aff{\field}{d}}{Q}$. 
By \autoref{cor:quadricSize}, we have $|V_Q|\geq |\field|^{d-2}$. 
Considering the cardinalities show that $\dim(\Span_\field(V_Q))\geq d-2$ with equality if and only if $V_Q\subseteq \Aff{\field}{d}$ is a $(d-2)$-dimensional linear subpsace.  
Thus $Q$ fails to be rich if and only if $V_Q$ is a $2$-codimensional subspace. 
In this case, after an appropriate change of base of $\field[X_1,\dots,X_d]$ from $\{X_1,\dots,X_d\}$ to degree $1$ homogeneous polynomials $\{L_1,\dots,L_d\}$, 
we may assume that this subspace is given by $L_1=L_2=0$. 
Denote by $Q_1\in\field[L_1,\dots,L_d]$ the resulting quadratic form in the new basis. 
Then 
i.e. that $Q_1(y_1,\dots,y_d)=0$ if and only if $y_1=y_2=0$. 
Write \[Q_1=\sum_{1\leq i\leq j\leq d} \lambda_{i,j}L_iL_j\] 
for suitable $\lambda_{i,j}\in\field$.
Let $e_i\in\Aff{\field}{d}$ where the $i$th entry is $1$, all others are $0$ (in the new basis).
Note that 
$Q_1(e_i+\mu e_j)=\lambda_{i,i}+\mu\lambda_{i,j}+\mu^2\lambda_{j,j}$  for every $i<j$ and $\mu\in\field$. 
Using this with $3\leq i<j$ and $\lambda\in\{0,1\}$, we see that $\lambda_{i,j}=0$ for every $3\leq i\leq j$. 
Then for $i\in\{1,2\}$, $3\leq j$ and arbitrary $\mu\in\field$, we have 
$0\neq Q_1(e_i,\mu e_j)=\lambda_{i,i}+\mu\lambda_{i,j}$ which forces $\lambda_{i,j}=0$. 
Now $Q_1 = \lambda_{1,1}L_1^2+\lambda_{1,2}L_1L_2+\lambda_{2,2} L_2^2$.
Finally $0\neq Q_1(e_1+\mu e_2)=\lambda_{1,1}+\mu\lambda_{1,2}+\mu^2\lambda_{2,2}$ holds for every $\mu\in\field$, i.e. the discriminant  $\lambda_{1,2}^2-4\lambda_{1,1}\lambda_{2,2}\in\field$ (of this quadratic expression in $\mu$) must not be a square in $\field$. 
Going back to the original basis gives the statement.
\end{proof}

We are ready to prove the main statements about finite fields.

\begin{proof}[Proof of \autoref{thm:QgenericoverFinite}]
    $Q$ is rich by \autoref{lem:richOverFinite}, so \autoref{prop:QgenericOverF} gives the statement.
\end{proof}

\begin{proof}[Proof of \autoref{thm:mainSphhereFiniteFields}]
    Let $Q=X_1^2+\dots+X_d^2$. Note that $Q$-quadrics are the hyperspheres in $\Aff{\field}{d}$, see \autoref{rem:sphere}. 
    Since $\sqrt{-1}\notin \field$ if and only if $|\field|\not\equiv 3\pmod{4}$, 
    \autoref{rem:rank2irreducibleQuadratics} and \autoref{thm:QgenericoverFinite} give the statement. 
\end{proof}

\subsection{In finite square grids of \texorpdfstring{$\R^d$}{R\^n}}\label{sec:ConstrGrid}
\begin{summary}
    In this final section, 
    we prove the remaining main statements of the paper (\autoref{thm:mainShpereGrid}, \autoref{thm:mainQgrid}). 
    We consider a large prime $p\leq n$ (satisfying some technical conditions) and work in a $d$-dimensional grid of size $p^d$. 
    We show that the solutions for $\Aff{\field_p}{d}$ can be lifted to the Euclidean plane, such that every hyperplane and $Q$-quadric with rational or real coefficients intersect our construction in more points than prescribed. 
\end{summary}

\begin{definition}
\label{def:piRho}
    Let $p$ be a prime. 
    Write $\field_p \leteq  \Z/(p)$ and let $\pi_p\from \Z\to \field_p$ be the natural projection. 
    Write $\pi_p^d\from \Z^d\to \Aff{\field_p}{d}$ for the induced map. 
    Define a map (of sets) $\rho_p\from \Q[X_1,\dots,X_d]\to \field_p[X_1,\dots,X_d]$ as follows. 
    Let $\rho_p(0)\leteq 0$. 
    For $0\neq F$, let $r\in\Q^\times$ be (the unique rational) so that $rF$ has integer coefficients whose greatest common divisor is $1$. 
    We obtain $\rho_p(F)\neq 0$ from $rF$ by replacing the coefficients by their image under $\pi_p$.
\end{definition}

\begin{lemma}\label{lem:lift}
    Let $p$ be a prime number. 
    Assume that $D_p\subseteq \Aff{\field_p}{d}$ is a $Q_p$-generic for some $Q_p\in\field_p[X_1,\dots,X_d]_2$.
    Define $D\subseteq \{1,2,\dots,p\}^d$ by $\pi_p^d(D)=D_p$ and 
    pick $Q\in\Q[X_1,\dots,X_d]$ so that $\rho_p(Q)=Q_p$. 
    Then $D\subset \R^d$ is $Q$-generic over $\R$     with $|D|=|D_p|$.
\end{lemma}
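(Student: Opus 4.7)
The first part, $|D|=|D_p|$, is immediate because $\pi_p^d$ restricts to a bijection $\{1,\dots,p\}^d\to\Aff{\field_p}{d}$ (each residue class mod $p$ contains exactly one element of $\{1,\dots,p\}$), so the lift $D$ of $D_p$ satisfies $|D|=|D_p|$ and moreover any two distinct points of $D$ project to distinct points of $D_p$. For the second part, by \autoref{lem:QgenericFieldeEtension} applied to $\Q\leq\R$, it suffices to show that $D$ is $Q$-generic over $\Q$, which I will prove by contradiction: a rational hyperplane or $Q$-quadric containing too many points of $D$ will reduce mod $p$ to a configuration contradicting $Q_p$-genericity of $D_p$.

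\textbf{Mechanism and case analysis.} Using \autoref{rem:sphere}, replace $Q$ by a nonzero scalar multiple so that $Q$ itself is a primitive integer polynomial; then $\pi_p(Q)=Q_p$ directly. Suppose either (a) a hyperplane $\Variety{F}\subset\Aff{\Q}{d}$ contains $d+1$ distinct points of $D$, or (b) a $Q$-quadric $\Variety{Q+f}\subset\Aff{\Q}{d}$ contains $d+2$ distinct points of $D$. Writing $\tilde F$ for $F$ or $Q+f$ respectively, pick $r\in\Q^\times$ so that $r\tilde F$ has integer coefficients of gcd one. Any common zero $P\in D$ of $\tilde F$ satisfies $(r\tilde F)(P)=0\in\Z$, so $\rho_p(\tilde F)(\pi_p^d(P))=0$; thus the corresponding $d+1$ or $d+2$ distinct points of $D_p$ all lie on $\Variety{\rho_p(\tilde F)}\subseteq\Aff{\field_p}{d}$. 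By primitivity, $\rho_p(\tilde F)\neq 0$ (otherwise every coefficient of $r\tilde F$ would be divisible by $p$, contradicting gcd one). In case (a), $\rho_p(\tilde F)$ has degree at most $1$ and, having zeros, is not a nonzero constant, so it defines a hyperplane in $\Aff{\field_p}{d}$ containing $d+1$ points of $D_p$, contradicting $Q_p$-genericity. In case (b), a short lowest-terms argument (using $Q$ primitive integer) shows that for $r(Q+f)$ to have integer coefficients, $r$ must be an integer, so the degree-two part of $\rho_p(\tilde F)$ equals $\pi_p(r)Q_p$: if $p\nmid r$ this is a nonzero scalar multiple of $Q_p$, so $\rho_p(\tilde F)$ defines a $Q_p$-quadric (again using \autoref{rem:sphere}) through $d+2$ points of $D_p$; if $p\mid r$ the degree-two part vanishes, and then $\rho_p(\tilde F)$ is a nonzero polynomial of degree at most $1$ with zeros, hence a hyperplane containing $d+2>d$ points of $D_p$. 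Either alternative contradicts $Q_p$-genericity.

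\textbf{Expected obstacle.} The one nontrivial point is the bookkeeping in case (b): controlling how the normalization scalar $r$ interacts with the two pieces $Q$ and $f$, and verifying that the degree-two part of the reduction either stays a scalar multiple of $Q_p$ or degenerates in a way that still yields a contradiction via the hyperplane condition. Once one reduces $Q$ to a primitive integer representative, the argument collapses to the observation that $rQ\in\Z[X_1,\dots,X_d]$ with $Q$ primitive forces $r\in\Z$, so the two sub-cases $p\nmid r$ and $p\mid r$ cleanly cover all possibilities.
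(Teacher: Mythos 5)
Your proof is correct and follows essentially the same route as the paper: reduce to $\Q$ via \autoref{lem:QgenericFieldeEtension}, use that $\pi_p^d$ is a bijection on $\{1,\dots,p\}^d$, clear denominators and reduce modulo $p$, and split cases according to whether the quadratic part survives the reduction. Your explicit bookkeeping of the scalar $r$ (a primitive integer representative of $Q$ forces $r\in\Z$, then the subcases $p\nmid r$ and $p\mid r$) is in fact a cleaner justification than the paper's terse parenthetical $\rho_p(Q+f)=\rho_p(Q)+\rho_p(f)$, which only holds up to nonzero scalar multiples.
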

\begin{proof}
    By \autoref{lem:QgenericFieldeEtension}, it is enough to show that $D$ is $Q$-generic over $\Q$. 
    To prove the latter one, we claim that for any $F\in\Q[X_1,\dots,X_d]$, we have 
   $|\{x\in D:F(x)=0\}|\leq |\{y\in D_p:\rho_p(F)(y)=0\}|$. 
   Indeed, if $F(x)=0\in \Q^d$, then $rF(x)=0\in\Z^d$, so 
   $\rho_p(F)(y)=0\in\Aff{\field}{d}$ 
   for $y\leteq \pi_p^d(x)\in D_p$ which is uniquely determined by $x$.

    Let $H=\Variety{\Aff{\Q}{d}}{f}$ be an arbitrary hyperplane (in $\Aff{\Q}{d}$) given by the degree $1$ polynomial $f\in\Q[X_1,\dots,X_d]$. 
    If $\deg(\rho_p(f))=1$, 
    then $H_p\leteq \Variety{\Aff{\field_p}{d}}{\rho_p(f)}$ is also hyperplane (in $\Aff{\field}{d}$), 
    so the claim shows that $|D\cap H|\leq |D_p\cap H_p|\leq d$ using the assumption of $D_p$ being $Q_p$-generic. 
    Otherwise, by \autoref{def:piRho}, $\rho_p(f)$ is a non-zero constant polynomial, hence $f$ has no solution on $\Z^d$, thus we have $|D\cap H|=0\leq d$ in this case as well. 

    Finally, $S=\Variety{\Aff{\Q}{d}}{Q+f}$ be an arbitrary $Q$-quadric (over $\Q$) given by the polynomial $f\in\Q[X_1,\dots,X_d]$ of degree at most $1$. 
    Let $F\leteq Q+f$ and define $F_p\leteq \rho_p(F)$. 
    If $\deg(\rho_p(Q+f))=2$, then 
    $S_p\leteq \Variety{\Aff{\field_p}{d}}{\rho_p(Q+f)}$ is a $Q_p$-quadric (as $\rho_p(Q+f)=\rho_p(Q)+\rho_p(f)$), 
    so $|D\cap S|\leq |D_p\cap S_p|\leq d+1$ as above using that $D_p$ is $Q_p$-generic.
    If $\deg(\rho_p(Q+f))\leq 1$, then $S_p=\Variety{\Aff{\field_p}{d}}{\rho_p(f)}$, so we have 
    $|D\cap S|\leq d\leq d+1$ as above.    
\end{proof}

\begin{proposition}\label{thm:mainEps}
    Fix a quadratic form $0\neq Q\in  \Q[X_1,\dots,X_d]_2$. 
    Then for every $0<\epsilon\leq 1$ there exists $N=N(\epsilon,Q, d)\in\N$ such that whenever 
    $n\geq N$, 
    there is $Q$-generic set $D\subseteq \{1,2,\dots,n\}^d$ over $\Q$ of size $|D|\geq (1-\epsilon)n$.
\end{proposition}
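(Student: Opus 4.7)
The plan is to reduce to the finite-field construction of Theorem \ref{thm:QgenericoverFinite} and then lift via Lemma \ref{lem:lift}. Concretely, I aim to find a prime $p$ in the interval $[(1-\epsilon/2)n, n]$ such that the reduction $Q_p \leteq \rho_p(Q)$ is \emph{not} an irreducible rank-$2$ quadratic form over $\field_p$. For such $p$, Theorem \ref{thm:QgenericoverFinite} (applicable since $d \leq p+1$ for large $p$) yields a $Q_p$-generic subset $D_p \subseteq \Aff{\field_p}{d}$ of size $p+1-d$, and Lemma \ref{lem:lift} lifts it to a $Q$-generic set $D \subseteq \{1,\dots,p\}^d \subseteq \{1,\dots,n\}^d$ of the same cardinality. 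Since $p + 1 - d \geq (1-\epsilon) n$ as soon as $p \geq (1-\epsilon/2)n$ and $n$ is large enough in terms of $d$ and $\epsilon$, the desired lower bound on $|D|$ will follow.

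The main task is therefore to identify primes $p$ for which $Q_p$ is not irreducible of rank $2$. After normalising $Q$ to have integer coefficients with gcd $1$, only finitely many primes can cause the rank of $Q_p$ to drop below the rank $r$ of $Q$ over $\Q$ (namely those dividing every $r\times r$ minor of the Gram matrix of $Q$). For all other primes the rank of $Q_p$ equals $r$, so the proof of Lemma \ref{lem:richOverFinite} gives richness whenever $r\neq 2$. If $r=2$ and $Q$ factors over $\Q$, then choosing primitive integer linear factors lets the factorisation reduce mod $p$ to a nontrivial factorisation of $Q_p$, which is therefore reducible and rich. The only delicate situation is when $Q$ has rank $2$ and is irreducible over $\Q$: by Remark \ref{rem:rank2irreducibleQuadratics}, $Q_p$ stays irreducible precisely when the (nonzero, integer) discriminant $\Delta$ of $Q$ is a non-square modulo $p$, so I additionally need $\sqrt{\Delta} \in \field_p$.

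To produce the prime, in the first three cases I would apply the Prime Number Theorem (Theorem \ref{thm:Prime}) to pick any prime $p \in [(1-\epsilon/2)n,\, n]$ outside the finite set of bad primes identified above; this is possible once $n$ exceeds some threshold depending on $\epsilon$, $Q$ and $d$. In the remaining rank-$2$ irreducible case, Lemma \ref{lem:quadraticResidue} guarantees $\sqrt{\Delta}\in \field_p$ whenever $p\equiv 1 \pmod{4|\Delta|}$, and since $\gcd(1, 4|\Delta|)=1$, Dirichlet's theorem (Theorem \ref{thm:Dirichlet}) supplies such a prime in $[(1-\epsilon/2)n,\, n]$ for all sufficiently large $n$. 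Intersecting with the finite set of bad primes is harmless because Dirichlet's theorem provides infinitely many, so enlarging $N$ handles the exclusions.

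The principal obstacle is the irreducible rank-$2$ case: the Prime Number Theorem alone is not enough, and one genuinely needs the quadratic-reciprocity input of Lemma \ref{lem:quadraticResidue} combined with Dirichlet's theorem to force the discriminant of $Q$ to split modulo $p$. Once the appropriate prime $p$ is in hand, the remainder is mechanical: Theorem \ref{thm:QgenericoverFinite} produces $D_p$ and Lemma \ref{lem:lift} transports it to the integer grid, with the cardinality bookkeeping absorbed by choosing $N(\epsilon, Q, d)$ large enough.
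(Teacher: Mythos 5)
Your proposal is correct and follows essentially the same route as the paper: pick a prime $p\in[(1-\epsilon/2)n,n]$ via the Prime Number Theorem in the generic case, or via Dirichlet's theorem with $p\equiv 1\pmod{4|\Delta|}$ plus Lemma~\ref{lem:quadraticResidue} when $Q$ is irreducible of rank $2$, so that $\rho_p(Q)$ is not irreducible of rank $2$, then invoke Theorem~\ref{thm:QgenericoverFinite} and lift with Lemma~\ref{lem:lift}. Your justification that $Q_p$ avoids the bad case (excluding the finitely many primes dividing the relevant minors, and reducing an integral factorisation mod $p$ in the reducible rank-$2$ case) is in fact somewhat more explicit than the paper's coefficient-bound argument, but it is the same proof in substance.
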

\begin{proof}
    First, consider the case when $Q$ is not irreducible of rank $2$, see \autoref{rem:rank2irreducibleQuadratics}. 
    By \autoref{rem:sphere}, we may assume that $Q=\sum_{1\leq i\leq j\leq d} \lambda_{i,j}X_iX_j\in\Z[X_1,\dots,X_d]_2$. 
    Let $C_Q\leteq \{2|\lambda_{i,j}|+1:1\leq i\leq j\leq d\}$. 
    We claim that $N\leteq \max\{N_{\epsilon/2},C_Q,2(d-1)/\epsilon\}$ satisfies the statement where $N_{\epsilon/2}$ is from \autoref{thm:Prime}. 

    Indeed, let $n\geq N$. 
    By \autoref{thm:Prime}, pick a prime number $p$ with $(1-\epsilon/2)n\leq p \leq n$. 
    As $n\geq C_Q$, 
    the quadratic form $Q_p\leteq \rho_p(Q)\in\field_p[X_1,\dots,X_d]_2$ is not irreducible of rank $2$, because $Q$ is not. 
    Since $n\geq 2(d-1)/\epsilon$, we see that 
    $|\field_p|+1-d\geq (1-\epsilon/2)n+ (\epsilon/2 ) n = (1-\epsilon)\geq 0$, 
    hence \autoref{thm:QgenericoverFinite} produces a $Q_p$-generic set $D_p\subseteq \Aff{\field_p}{d}$ of size $|D_p|=p+1-d$. 
    Thus \autoref{lem:lift} produces a $Q$-generic set $D\subseteq \{1,\dots,p\}^d$ over $\R$ of size $|D|=|D_p|=p+1-d\geq (1-\epsilon)n$ as required.

    The case when $Q$ is irreducible of rank $2$ is analogous to the following slight variation. Write $\Delta$ for the discriminant of $Q$ from \autoref{rem:rank2irreducibleQuadratics}, and this time use \autoref{thm:Dirichlet} to get a suitable prime $p$ satisfying $p\equiv 1 \pmod{4|\Delta|}$ with $(1-\epsilon/2)n\leq p \leq n$ $p\leq (1-\epsilon=2)n$. 
    Now \autoref{lem:quadraticResidue} and \autoref{rem:rank2irreducibleQuadratics} shows 
    that the quadratic from $Q_p\in\field_p[X_1,\dots,X_d]$ is of degree $2$, but is reducible. 
    Thus \autoref{thm:QgenericoverFinite} is applicable and finishes the proof as above.  
\end{proof}

With this, the proof of the main statements are complete.

\begin{proof}[Proof of \autoref{thm:mainQgrid}]
    This is a reformulation of \autoref{thm:mainEps}.
\end{proof}

\begin{proof}[Proof of \autoref{thm:mainShpereGrid}]
    This follows from \autoref{thm:mainQgrid} in the case $Q=X_1^2+\dots+X_d^2$.
\end{proof}

\section{Open problems}

It is a natural open problem to find the exact value of $f([n]^d,\R^d)$ from \autoref{p:noSharp}, 
or more generally, the exact value of $f_Q([n]^d,\R^d)$ from \autoref{p:noQuadric}. 
In fact, any improvement in the bounds of \autoref{thm:mainShpereGrid} and \autoref{thm:mainQgrid} would be interesting and quite possibly requires new ideas. 
Similar questions arise for the finite field analogue (\autoref{thm:QgenericoverFinite}), where improving the bounds to $f_Q(\field^d,\field^d)$ may be easier.

\printbibliography

\end{document}